\documentclass[bj, numbers, noshowframe]{imsart}

\usepackage{amsmath, amsfonts, amssymb, amsthm, graphicx}
\usepackage[T1]{fontenc}

\startlocaldefs
\theoremstyle{plain}
\newtheorem{theorem}{Theorem}
\newtheorem{proposition}[theorem]{Proposition}
\newtheorem{corollary}[theorem]{Corollary}

\theoremstyle{definition}
\newtheorem{assumption}[theorem]{Assumption}

\newcommand{\norm}[1]{\left\lVert#1\right\rVert}
\newcommand{\ip}[1]{\left\langle #1 \right\rangle}
\newcommand{\e}{\epsilon}

\newcommand{\tr}{\text{tr}}

\newcommand{\R}{\mathbb{R}}
\newcommand{\Z}{\mathbb{Z}}

\newcommand{\E}{\mathbb{E}}
\newcommand{\Prob}{\mathbb{P}}
\renewcommand{\P}{\mathcal{P}}
\newcommand{\Var}{\text{Var}}

\newcommand{\W}{\mathcal{W}}

\renewcommand{\H}{\mathcal{H}}

\newcommand{\X}{\textbf{X}}

\newcommand{\C}{\mathcal{C}}

\let\temp\phi
\let\phi\varphi
\let\varphi\temp

\newcommand{\eq}[1]{\begin{align*}#1\end{align*}}

\endlocaldefs

\begin{document}

\begin{frontmatter}

\title{Implications of weak convergence rates of Markov transition kernels}
\runtitle{Implications of weak convergence rates of Markov transition kernels}

\begin{aug}

\author[A]{\fnms{Austin}~\snm{Brown}\ead[label=e1]{austinbrown@tamu.edu}}
\address[A]{Department of Statistics, Texas A\&M University, TX, USA \printead[presep={,\ }]{e1}}
\end{aug}

\begin{abstract}
This article extends weak convergence bounds of Markov transition kernels to convergence bounds on the variance of the Markov kernel applied to Lipschitz functions.
In the reversible case, weak convergence rates of the transition kernels imply chi-squared divergence convergence bounds if the density of the initialization measure is Lipschitz. These results provide new tools to establish central limit theorems for Lipschitz functions used in Markov chain Monte Carlo simulations. Applications are explored to the stability of Metropolis-Hastings algorithms in high dimensions, stochastic gradient descent, and solutions to stochastic delay equations.
\end{abstract}

\begin{keyword}
\kwd{weak convergence of Markov processes}
\kwd{subgeometric convergence of Markov processes}
\kwd{central limit theorems for Markov processes}
\end{keyword}

\end{frontmatter}

\section{Introduction}

Obtaining informative convergence bounds of a Markov process to its equilibrium measure is increasingly challenging in high dimensions. 
Popular techniques based on incorporating a local Doeblin condition \cite{MT2009, QinHobert2021} can degenerate in high dimensions or fail entirely.
Convergence of the transition kernels in total variation extends to powerful results in the stronger chi-squared divergence and also central limit theorems for averages of a wide class of unbounded functions \cite{kipnis1986central, dedecker:rio:2000, bakry_rate_2008}.
Alternative techniques based on weak convergence bounds can be simpler to establish and scale to high dimensions avoiding such difficulties \cite{hairer_spectral_2014, qin2021wasserstein, Madras2010}.
However, weak convergence of Markov transition kernels only provides certain guarantees for bounded Lipschitz functions and extensions are less explored.

The main interest of this work is understanding the implications of weak convergence rates and how they can provide stronger guarantees for Markov processes.
%In particular, previous research extends convergence in total variation to obtain central limit theorems and other useful results \cite{roberts_geometric_1997, bakry_rate_2008} but implications of weak convergence seems less understood.
The contributions of this article extend weak convergence bounds of Markov transition kernels to explicit convergence bounds on the variance of the transition kernel applied to unbounded Lipschitz functions. Additionally, the weak convergence is characterized through an equivalence relation. In the reversible case, these results imply explicit convergence bounds in the chi-squared divergence if the density of the initial measure is Lipschitz continuous.
This has potential for novel mixing time analysis of Markov processes \cite{lovasz_random_1993, dwivedi_log-concave_2019} with a wide-class of initialization measures.
These results lead to new technical tools to verify central limit theorems for averages of Lipschitz functions for Markov processes and upper bounds on the variance in the Gaussian limit.

Convergence bounds in this work rely on an arbitrary convergence rate, geometric or subgeometric, and do not require reversibility of the Markov process. 
In the reversible and geometrically converging case, stronger conclusions have been developed \cite{hairer_spectral_2014, brown:jones:2023}.
Many of the results are motivated for subgeometric decay towards the invariant measure that is often empirically observed in simulations.
Implications of weak convergence of Markov transition kernels have been studied asymptotically under additional regularity on the transition kernels \cite{tweedie_modes_1977}.
In general, geometric convergence has been the primary focus throughout the literature but a tail mismatch between the distribution generated from a Markov process and the invariant target measure leads to subgeometric rates of convergence \cite{butkovsky_subgeometric_2014, durmus:etal:2016}.
Sufficient conditions for central limit theorems for unbounded Lipschitz functions have been previously studied under convergence in stronger Wasserstein distances \cite{komorowski_central_2012, jin2020wasserstein}.

Applications are explored to understanding the equilibrium of high-dimensional Metropolis-Hastings algorithms, stochastic gradient descent, and solutions to stochastic delay equations.
We use weak convergence bounds to investigate chi-squared convergence bounds on a Preconditioned Crank-Nicolson (pCN) Metropolis-Hastings process where techniques based on total variation for popular algorithms such as the random-walk Metropolis degenerate on infinite-dimensional spaces.
Metropolis-Hastings in infinite dimensions is a useful model to understanding the scaling of high dimensional spaces often of importance in AI.
In certain applications, techniques to obtain convergence bounds relying on total variation are challenging such as analyzing stochastic gradient descent (SGD) where analysis in weak convergence can lead to informative convergence bounds.
We also study convergence bounds on the variance for the solutions of stochastic delay equations where the wealth of tools for analyzing convergence to equilibrium with drift and minorization conditions are unavailable \cite{MT2009}.
Stochastic delay equations provide an important example with applications to understanding the dynamics of neural networks, stochastic volatility models in finance, and disease transmission in epidemiology.

This article is organized as follows.
Section~\ref{section:prelim} defines preliminary background on Markov processes.
Section~\ref{section:convergence} develops the main contributions and its applications to control the variance of empirical averages to develop central limit theorems.
Section~\ref{section:applications} investigates applications to Metropolis-Hastings algorithms, stochastic gradient descent, and the solutions for stochastic delay equations.

\section{Preliminary background on Markov processes}
\label{section:prelim}

We will assume every set and function are Borel measurable, and every measure is defined with Borel sets unless stated otherwise.
Let $\X$ be a nonempty metric space with metric $d(\cdot, \cdot) : \X \times \X \to \R_+$. 
Let $\Pi$ be a target probability measure on $\X$ and for $p \ge 1$, define $L_p(\Pi)$ as the Lebesgue space with its norm $\norm{\cdot}_{L_p(\Pi)}$.

Let $T$ define the time index set being continuous time $T = [0, \infty)$ or discrete time $T = \Z_+$.
Let $(\P_t)_{t \in T}$ be a Markov semigroup on $L_2(\Pi)$ defined by a Markov process $(X_t)_{t \in T}$ on $\X$ and its Markov transition kernels
\eq{
\P_t(x, \cdot)
= \Prob(X_t \in \cdot \mid X_0 = x)
}
for $x \in \X$ and $t \in T$.
We will use the notation
\eq{
&\P_t f( \cdot ) = \int_{\X} f(y) \P_t(\cdot, dy),
&\nu \P_t( \cdot ) = \int_{\X} \P_t(x, \cdot) \nu(dx)
}
for every $t \in T$, every function $f : \X \to \R$, and every probability measure $\nu$ on $\X$.
This defines the marginal  distribution of the Markov process by $\nu \P_t$ when initialized at a probability measure $\nu$. 
We will assume $\Pi$ is the unique invariant probability measure for the Markov process meaning $\Pi \P_t = \Pi$ for every $t \in T$.
We say the transition kernels $(\P_t)_{t \in T}$ are reversible with respect to $\Pi$ if for bounded functions $f, g : \X \to \R$,
\eq{
\int_{\X} \P_t f g d\Pi
= \int_{\X} f \P_t g d\Pi.
}

\section{Weak convergence implications for Markov processes}
\label{section:convergence}

We are interested in convergence bounds on the variance of conditional averages with respect to the transition kernel $\P_t f$ for functions $f : \X \to \R$, that is,
\[
\norm{ \P_t f - \int_{\X} f d\Pi }_{L_2(\Pi)}^2
= \int_{\X} \left( \P_t f - \int_{\X} f d\Pi \right)^2 d\Pi.
\]
Control over this quantity can lead to strong convergence results on the distribution of the Markov process if $\P_t$ is reversible and $f = d\nu / d\Pi$ is the density for an initialization probability measure $\nu$. 
Then this will be the chi-squared divergence
\[
\chi^2(\nu \P_t \mid \Pi) 
= \norm{ \P_t\left( \frac{d\nu}{d\Pi} \right) - 1 }_{L_2(\Pi)}^2.
\] 
Control over the chi-square divergence is often not accomplished directly but instead by using drift and minorization based techniques to obtain convergence in total variation and then extending these results \cite{MT2009}.
The total variation of interest between the Markov transition kernel and the invariant measure controls bounded functions defined for $x \in \X$ by
\[
\norm{\P_t(x, \cdot) - \Pi}_{\text{TV}}
= \sup_{\norm{f}_{\infty} \le 1} \frac{1}{2} \left| \P_t f(x) - \int_{\X} f d\Pi \right|
\]
where $\norm{f}_{\infty} = \sup_{x \in \X} |f|$.
In the case of total variation convergence bounds on the transition kernels based on drift and minorization conditions \cite{MT2009}, bounds on the variance of $\P_t f$ have been studied \cite{bakry_rate_2008}.

Instead of convergence bounds in total variation, the assumption on the transition kernels we make is based on convergence with respect to the Bounded Lipschitz (BL) norm defined for $x \in \X$ by
\[
\norm{\P_t(x, \cdot) - \Pi}_{BL(d)} 
= \sup\left\{ \left| \P_t f(x) - \int_{\X} f d\Pi \right| : \norm{f}_{\text{Lip}(d)} + \norm{f}_{\infty} \le 1 \right\}
\]
where $\norm{f}_{\text{Lip}(d)} = \sup_{x, y \in \X, x \not= y}|f(y) - f(x)| / d(x, y)$.
This is a substantially weaker discrepancy than total variation as the set of test functions are Lipschitz with constant 1 instead of arbitrarily bounded functions.
Let $\wedge, \vee$ denote the min and max respectively.
This is related to the Wasserstein distance using the standard bounded metric $d \wedge 1$
\[
\W_{d \wedge 1} \left( \P_t(x, \cdot), \Pi \right)
= \inf_{\Gamma(\X \times \cdot) = \P_t(x, \cdot), \Gamma(\cdot \times \X) = \Pi} \int_{\X \times \X} \left[ d(x, y) \wedge 1 \right] \Gamma(dx, dy))
\]
where the infimum is taken over all joint probability measures $\Gamma$ satisfying the marginal constraints.

The main assumption we are concerned with is the transition kernel decay towards its invariant measure over all bounded Lipschitz functions.
\begin{assumption}
\label{assumption:convergence}
For the transition kernels $(\P_t)_{t \in T}$, assume there is a function $M : \X \to [1, \infty)$ and a rate function $R : T \to (0, 1]$ strictly monotonically decreasing to $0$ such that 
\[
\norm{\P_t(x, \cdot) - \Pi}_{BL(d)} 
\le M(x) R(t)
\]
holds for every $x \in \X$.
\end{assumption}

In this assumption, the rate function $R(\cdot)$ is strictly decreasing to $0$ defining a rate of decay of the Markov process towards its invariant measure.
Some examples include polynomial rates $R(t) = (1 + t)^{-\kappa}$ or geometric rates $R(t) = \exp(-\kappa t)$ for some $\kappa > 0$.
We describe the cost of an imperfect initialization by a function $M : \X \to [1, \infty)$.
For vectors, let $\norm{\cdot}_p$ for $p \ge 1$ denote the $p$-norm.
The function $M \propto V$ is often defined through a Lyapunov drift function $V$ up to some constant such as $V(x) = 1 + \norm{x}_2^2$.
Conditions based on local coupling conditions and Lyapunov drift conditions to develop subgeometric convergence bounds in Wasserstein distances have been studied extensively \cite{butkovsky_subgeometric_2014, durmus:etal:2016}.
For example, Assumption~\ref{assumption:convergence} can be verified with a subgeometric drift condition \cite{douc_practical_2004} combined with a local Wasserstein coupling condition.
Assumption~\ref{assumption:convergence} and many previous subgeometric convergence bounds only guarantee control over bounded Lipschitz functions and is limiting in applications.

We define an analogy to the signal to noise ratio with the goal to define how difficult a function is to estimate using a Markov process.
The spread to fluctuation ratio of order $p \ge 1$ for a function $f : \X \to \R$ is defined
\[
SFR_p(f) = 
\frac{
\norm{ f - \int_{\X} f d\Pi }_{L_p(\Pi)}
}{
\norm{f}_{\text{Lip}(d)}
}.
\]
The goal is for this definition to be small when the function is difficult to estimate and larger when the function is easier to estimate.
Here the numerator defines the spread or signal of a function $f$ with respect to the target measure.
In particular, functions that are nontrivial only on regions of small probability of the target measure have small signal.
%A function with a large fluctuation represented by the Lipschitz constant and low spread can be difficult for the Markov process to visit during exploration.
The first result extends convergence of the transition kernels for bounded Lipschitz functions to convergence of the variance for unbounded Lipschitz functions. 

\begin{theorem}
\label{theorem:nonreversible_bound}
Assume the transition kernels $(\P_t)_{t \in T}$ satisfy Assumption~\ref{assumption:convergence}.
Let $q \ge 2$ and let $p > 1$.
For every Lipschitz function $f : \X \to \R$, the following hold:
\begin{enumerate}

\item If $t \in T$ is large enough so that 
$
t \ge R^{-1}\left[ 
SFR_{q p}(f)^{\frac{q p}{q-1}}
\right]
$ 
then
\begin{align}
\norm{ \P_t f - \int_{\X}  f d\Pi}_{L_q(\Pi)}^q
&\le  c_1 \left[ \int_{\X} M^{q-1} d\Pi \right]^{1-\frac{1}{q p}} R(t)^{(q-1) \left( 1-\frac{1}{p} \right)} \norm{ f - \int_{\X}  f d\Pi}_{L_{qp}(\Pi)}^q
\end{align}
for a constant $c_1 \le 2^{2q-1} 3$.

\item For all $t \in T$,
\begin{align}
\norm{ \P_t f - \int_{\X}  f d\Pi}_{L_q(\Pi)}^q
&\le C_{q, \Pi} \left[ \int_{\X} M^{q-1} d\Pi \right]^{1-\frac{1}{q p}} R(t)^{(q-1) \left(1-\frac{1}{p} \right)} \norm{ f }_{\text{Lip}(d)}^q
\end{align}
where $C_{q, \Pi} = 2^{-1} c_1 \left[ 1 - 1/q + (1 + 1/q) \norm{d}_{L_{q p}(\Pi \otimes \Pi)}^{q} \right]$.

\end{enumerate}
Additionally, if $(\P_t)_{t \in T}$ are reversible, then $R(\cdot)$ can be replaced with $R(2 \cdot)$.
\end{theorem}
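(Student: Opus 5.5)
We would reduce to a centred, normalized function. Replacing $f$ by $f-\int_{\X} f\,d\Pi$ changes neither side, and both inequalities are homogeneous of degree $q$ in $f$. So we may assume $\int_{\X} f\,d\Pi=0$ and $\norm{f}_{\text{Lip}(d)}=1$; then $SFR_{qp}(f)=\norm{f}_{L_{qp}(\Pi)}$.

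The core device is truncation at a level $K>0$. Set $f_K=(f\wedge K)\vee(-K)$ and $g_K=f-f_K$.
\begin{itemize}
\item[(i)] $f_K$ is $1$-Lipschitz and bounded by $K$, so $f_K/(1+K)$ lies in the unit ball of $\norm{\cdot}_{\text{Lip}(d)}+\norm{\cdot}_\infty$. Assumption~\ref{assumption:convergence} then gives $|\P_t f_K(x)-\int f_K d\Pi|\le(1+K)M(x)R(t)$. Trivially also $|\P_t f_K-\int f_K d\Pi|\le 2K$.
\item[(ii)] For the tail piece, Jensen and invariance of $\Pi$ give $\norm{\P_t g_K-\int g_K d\Pi}_{L_q(\Pi)}\le 2\norm{g_K}_{L_q(\Pi)}$. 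Since $|g_K|\le|f|\mathbf 1_{\{|f|>K\}}$, Markov's inequality yields $\norm{g_K}_{L_q}^q\le \norm{f}_{L_{qp}}^{qp}K^{-q(p-1)}$.
\end{itemize}

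For the bounded piece, write $h=\P_t f_K-\int f_K\,d\Pi$ and use $|h|^q\le(2K)\,|h|^{q-1}\le 2K\,((1+K)MR(t))^{q-1}$. To obtain the exponent $1-\frac1{qp}$ on $\int M^{q-1}d\Pi$ rather than exponent $1$, we would instead split $|h|^q=|h|^{q-1}\cdot|h|$. We would keep $|h|\le|\P_t f|+|\P_t g_K|+\dots$ in $L_{qp}$, and apply Hölder with exponents $\frac{qp}{qp-1}$ and $qp$, together with $M\ge1$ (so that $M^{(q-1)qp/(qp-1)}$ can be compared with $M^{q-1}$ up to the interpolation). This produces a bound of the form $K^{q-1}R(t)^{q-1}\bigl[\int M^{q-1}d\Pi\bigr]^{1-1/(qp)}\norm{f}_{L_{qp}}$.

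Combining via $(a+b)^q\le2^{q-1}(a^q+b^q)$, we then choose $K$ to balance the two contributions. Roughly, $K^{q-1}R(t)^{q-1}\norm{f}_{qp}\asymp \norm{f}_{qp}^{qp}K^{-q(p-1)}$, so $K$ is a power of $\norm{f}_{qp}R(t)^{-1}$. With this choice the total is $R(t)^{(q-1)(1-1/p)}\norm{f}_{L_{qp}}^q$ times the $M$-factor. The hypothesis $t\ge R^{-1}[SFR_{qp}(f)^{qp/(q-1)}]$ is exactly what guarantees that the chosen $K\ge1$, so $1+K\le2K$; this produces the constant $c_1\le 2^{2q-1}3$. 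Getting the exponents of the Hölder step and of the choice of $K$ to match the stated powers precisely is the step I expect to be the main obstacle; I would first try to track exponents with $M\equiv1$ and then insert the Hölder factor.

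For part 2, in the regime of part 1 we bound $\norm{f}_{L_{qp}(\Pi)}\le\norm{d}_{L_{qp}(\Pi\otimes\Pi)}$. This follows from $f(x)-\int f\,d\Pi=\int(f(x)-f(y))\Pi(dy)$ and Jensen. For small $t$, below the threshold, we use the contraction $\norm{\P_t f}_{L_q}\le\norm{f}_{L_q}$. Because $R(t)$ is then bounded below by a power of $SFR_{qp}(f)$, this can be rewritten in the same form. Part of the $\norm{f}_{qp}^q$ mass is absorbed by Young's inequality (giving the weights $1-1/q$ and $1+1/q$), and since $M\ge1$ the $M$-factor is at least $1$ and can be inserted freely; this yields $C_{q,\Pi}$.

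In the reversible case, we would use $\norm{\P_t f}_{L_2}^2=\int f\,\P_{2t}f\,d\Pi$ and run the same truncation argument on $\P_{2t}$. For $q>2$, we would pass through the $L_2$ bound combined with Riesz–Thorin-type interpolation, which replaces $R(t)$ by $R(2t)$.
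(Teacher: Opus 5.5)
Your skeleton is the paper's: truncate at level $K$, apply the $BL$ bound to the truncated piece, use Markov's inequality for the tail, and optimize $K$. Two steps fail as proposed.

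\textbf{The H\"older step.} Applying H\"older with exponents $\frac{qp}{qp-1}$ and $qp$ to $|h|^{q-1}|h|$ leaves you with $\int_{\X} M^{(q-1)qp/(qp-1)}d\Pi$. Since $M\ge 1$, you have $M^{(q-1)qp/(qp-1)}\ge M^{q-1}$, so the comparison goes the wrong way, and this integral need not even be finite. Interpolating against $|h|\le 2K$ repairs the power of $M$, but only at the cost of $R(t)^{(q-1)(1-1/(qp))}$ in place of $R(t)^{q-1}$. So your claimed intermediate bound does not follow.

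The step is also unnecessary. Keep the crude estimate $\int_{\X}|h|^q d\Pi\le 2K(1+K)^{q-1}R(t)^{q-1}\int_{\X}M^{q-1}d\Pi\le 2^{q-1}(K+K^q)R(t)^{q-1}\int_{\X}M^{q-1}d\Pi$, which holds for every $K>0$. Add the tail and take $K^{qp}=2(p-1)\norm{f}_{L_{qp}(\Pi)}^{qp}/(R(t)^{q-1}\int_{\X}M^{q-1}d\Pi)$.
\begin{itemize}
\item The $K^q$ term and the tail give $[\int_{\X}M^{q-1}d\Pi]^{1-1/p}R(t)^{(q-1)(1-1/p)}\norm{f}_{L_{qp}(\Pi)}^q$.
\item The term linear in $K$, which comes from the Lipschitz part of the $BL$ norm, gives $[\int_{\X}M^{q-1}d\Pi]^{1-1/(qp)}R(t)^{(q-1)(1-1/(qp))}\norm{f}_{L_{qp}(\Pi)}$.
\end{itemize}
That linear term is the real source of the exponent $1-\frac{1}{qp}$. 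The hypothesis $R(t)^{q-1}\le\norm{f}_{L_{qp}(\Pi)}^{qp}$ is exactly the condition under which this term is at most $R(t)^{(q-1)(1-1/p)}\norm{f}_{L_{qp}(\Pi)}^q$ times the same $M$-factor. It is not the condition $K\ge 1$; for either choice of $K$, that would be a different condition also involving $\int_{\X}M^{q-1}d\Pi$.

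\textbf{Part 2 breaks for $p>2$.} Below the threshold you only know $\norm{f}_{L_{qp}(\Pi)}^{qp}<R(t)^{q-1}$. Contraction then gives $\norm{\P_t f}_{L_q(\Pi)}^q\le\norm{f}_{L_{qp}(\Pi)}^q<R(t)^{(q-1)/p}$. But $R(t)^{(q-1)/p}\le R(t)^{(q-1)(1-1/p)}$ holds only when $p\le 2$. For $p>2$, with $\norm{f}_{L_{qp}(\Pi)}^{qp}$ just below $R(t)^{q-1}$ and $R(t)\to 0$, your bound is of order $R(t)^{(q-1)/p}$, which is not $O(R(t)^{(q-1)(1-1/p)})$. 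So no constant $C_{q,\Pi}$ rescues the case split.

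The paper makes no case split:
\begin{itemize}
\item The two-term bound above is valid for every $t$.
\item Since $R\le 1$, one has $R(t)^{(q-1)(1-1/(qp))}\le R(t)^{(q-1)(1-1/p)}$.
\item The linear term is handled by Young's inequality $\norm{\phi}_{L_{qp}(\Pi)}\le\frac1q\norm{\phi}_{L_{qp}(\Pi)}^q+1-\frac1q$. This, not absorbing mass below the threshold, is where the weights $1\pm\frac1q$ come from.
\item One finishes with $\norm{\phi}_{L_{qp}(\Pi)}\le\norm{d}_{L_{qp}(\Pi\otimes\Pi)}$.
\end{itemize}

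\textbf{Reversible case.} The paper uses reversibility only in the bounded-piece estimate and leaves the tail untouched. For $q=2$, with $\psi_0=f_K-\int_{\X}f_K d\Pi$, this reads $\int_{\X}(\P_t\psi_0)^2d\Pi=\int_{\X}\psi_0\,\P_{2t}\psi_0\,d\Pi\le 2K(1+K)R(2t)\int_{\X}M\,d\Pi$. Your Riesz--Thorin step for $q>2$ would need an actual argument; the paper only asserts that the proof is similar.
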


\begin{proof}
Let $\phi : \X \to \R$  be a Lipschitz function with $\int \phi d\Pi = 0$ and $\norm{\phi}_{\text{Lip}(d)} \le 1$. 
The result is trivial if $\int_{\X} M^{q-1} d\Pi = \infty$ or $\norm{ \phi }_{L_{qp}(\Pi)} = \infty$, so we will assume both are finite.
Let $r > 0$ and define $\psi_r : \X \to \R$ by $\psi_r = (-r) \vee ( r \wedge \phi)$. 
In this case, $\norm{\psi_r}_{\infty} \le r$ and $\norm{\psi_r}_{\text{Lip}(d)} \le \norm{\phi}_{\text{Lip}(d)}$.

By convexity and the invariance of $\Pi$, we have the upper bound
\begin{align}
\int_{\X} | \P_t \phi |^q d\Pi
&= \int_{\X} \left| \P_t\phi - \P_t\psi_r + \int_{\X} \psi_r d\Pi + \P_t\psi_r - \int_{\X} \psi_r d\Pi \right|^q d\Pi 
\nonumber
\\
&\le 2^{q-1} \int_{\X} \left| \P_t\phi - \P_t\psi_r  + \int_{\X} \psi_r d\Pi \right|^q d\Pi + 2^{q-1} \int_{\X} \left| \P_t\psi_r - \int_{\X} \psi_r d\Pi \right|^q d\Pi
\nonumber
\\
&\le 2^{q-1} \int_{\X} \left| \phi - \psi_r  + \int_{\X} \psi_r d\Pi \right|^q d\Pi + 2^{q-1} \int_{\X} \left| \P_t\psi_r - \int_{\X} \psi_r d\Pi \right|^q d\Pi.
\label{eq:separate_parts}
\end{align}

To bound the second part in \eqref{eq:separate_parts}, we have since $\norm{\psi_r}_{\text{Lip}(d)} \le \norm{\phi}_{\text{Lip}(d)}$, 
\begin{align}
\int_{\X} \left| \P_t\psi_r - \int \psi_r d\Pi \right|^q d\Pi 
%&= \int_{\X} \left| \P_{t} \psi_r - \int \psi_r d\Pi \right| \left| \P_{t} \psi_r - \int \psi_r d\Pi \right| d\Pi 
%\nonumber
%\\
&\le 2 \norm{\psi_r}_{\infty} \int_{\X} \left| \P_{t} \psi_r - \int \psi_r d\Pi \right|^{q-1} d\Pi 
\nonumber
\\
&\le 2 \norm{\psi_r}_{\infty} \int_{\X} M^{q-1} d\Pi R(t)^{q-1} ( \norm{\psi_r}_{\text{Lip}(d)} +  \norm{\psi_r}_{\infty} )^{q-1}
\nonumber
\\
&\le 2^{q-1} \norm{\psi_r}_{\infty} \int_{\X} M^{q-1} d\Pi R(t)^{q-1} \left( \norm{\psi_r}_{\text{Lip}(d)}^{q-1} +  \norm{\psi_r}_{\infty}^{q-1} \right)
\nonumber
\\
%&\le 2  \norm{\psi_r}_{\infty} \int_{\X} M d\Pi R(t) (\norm{\phi}_{\text{Lip}(d)} +  \norm{\psi_r}_{\infty})
%\\
&\le 2^{q-1} \int_{\X} M^{q-1} d\Pi R(t)^{q-1} \left( r + r^{q} \right).
\label{eq:ubconvergence}
\end{align}

We take some inspiration from the truncation techniques in \cite{cattiaux_weak_2007}.
To bound the first part of \eqref{eq:separate_parts}, using convexity,
\eq{
\int_{\X} \left| \phi -  \psi_r  + \int \psi_r d\Pi \right|^q d\Pi
&\le 2^{q-1} \int_{\X} \left| \phi -  \psi_r \right|^q d\Pi  + 2^{q-1} \left| \int \psi_r d\Pi \right|^q.
}
Since $\int_{\X} \phi d\Pi = 0$, then
\eq{
\int_{\X} \psi_r d\Pi 
&= \int_{|\phi| \le r} \phi d\Pi + \int_{|\phi| > r} \psi_r d\Pi 
= -\int_{|\phi| > r} \phi d\Pi + \int_{|\phi| > r} \psi_r d\Pi 
\\
&= \int_{|\phi| > r} (\psi_r - \phi) d\Pi.
}
Since $|\phi -  \psi_r| \le (|\phi| - r) I_{ \{ |\phi| > r \} }$, then using Jensen's inequality,
\eq{
\int_{\X} \left| \phi -  \psi_r  + \int \psi_r d\Pi \right|^q d\Pi
%&\le 2 \int_{\X} ( \phi -  \psi_r )^2 d\Pi  + 2 \left( \int \psi_r d\Pi \right)^2
%\\
&\le 2^q \int_{\X} | \phi -  \psi_r |^q d\Pi
\\
&\le 2^q \int_{|\phi| > r} | |\phi| -  r |^q d\Pi.
}
By Hölder's inequality and Markov's inequality, then for $p > 1$,
\begin{align}
\int_{\X} \left| \phi -  \psi_r  + \int \psi_r d\Pi \right|^q d\Pi
%&\le 4 \int_{|\phi| > r} (|\phi| -  r)^2 d\Pi
%\\
&\le 2^q \int_{|\phi| > r} |\phi|^q d\Pi
\nonumber
\\
&\le 2^q \left( \int_{\X} | \phi |^{q p} d\Pi \right)^{1/p} \Pi(|\phi| > r)^{1 - 1/p}
\nonumber
\\
&\le 2^q \frac{ \int_{\X} |\phi|^{q p} d\Pi }{ r^{q (p - 1)} }.
\label{eq:ubtail}
\end{align}

Combining \eqref{eq:ubconvergence} and \eqref{eq:ubtail}, for $r > 0$,
\eq{
\int_{\X} | \P_t \phi |^2 d\Pi
&\le 2^{2q-2} \int_{\X} M^{q-1} d\Pi R(t)^{q-1} (r + r^q)
+ \frac{ 2^{2q - 1} \int_{\X} |\phi|^{q p} d\Pi }{r^{q (p-1)}}.
}
So when $\int_{\X} |\phi|^{q p} d\Pi$ is not small as in many applications, we can expect the first term on the right hand side with $r^2$ to dominate and minimizing this, we get
\[
r = \left( \frac{ 2 (p-1) \int_{\X} |\phi|^{q p} d\Pi }{\int_{\X} M^{q-1} d\Pi R(t)^{q-1}} \right)^{\frac{1}{q p}}
\]
We then get the upper bound
\begin{align}
\int_{\X} | \P_t \phi |^q d\Pi
&\le 2^{2q-2} \left( 2 (p-1 ) \right)^{\frac{1}{qp}} \left[ \int_{\X} M^{q-1} d\Pi \right]^{1-\frac{1}{qp}}  R(t)^{(q-1) (1-\frac{1}{qp})} \left[ \int_{\X} |\phi|^{qp} d\Pi \right]^{\frac{1}{qp}} 
\nonumber
\\
&+ 2^{2q-2} c_p \left[ \int_{\X} M^{q-1} d\Pi \right]^{1-\frac{1}{p}} R(t)^{(q-1) (1-1/p)} \left[ \int_{\X} |\phi|^{q p} d\Pi \right]^{1/p}
\label{eq:ub_messy}
\end{align}
where $c_p =  2^{1/p} \left[ (p-1)^{1/p} + \frac{ 1 }{ (p-1)^{1-1/p} } \right]$.
We can maximize $c_p$ so that $c_p \le 3$ and $( 2(p-1 ) )^{\frac{1}{q p}} \le ( 9/4 )^{1/q}$. 

To prove the first result, since $M \ge 1$, there is a constant $c \le 2^{2q-2} 3$ such that
\eq{
\int_{\X} | \P_t \phi |^q d\Pi
&\le c \left[ \int_{\X} M^{q-1} d\Pi \right]^{1-\frac{1}{qp}} R(t)^{(q-1)(1-1/p)} \left[ \int_{\X} |\phi|^{q p} d\Pi \right]^{1/p}
\\
&\hspace{.3cm}\left\{
R(t)^{(q-1) (\frac{1}{p}-\frac{1}{qp} )} \left[ \int_{\X} |\phi|^{q p} d\Pi \right]^{(\frac{1}{qp}-\frac{1}{p} )}
+ 1
\right\}.
}
If $R(t)^{q-1} \le \int_{\X} |\phi|^{q p} d\Pi$, then the first result follows.

For the second result, since $R \le 1$ and $M \ge 1$, then using \eqref{eq:ub_messy} and Young's inequality
\eq{
&\int_{\X} \left| \P_t \phi - \int_{\X} \phi d\Pi \right|^q d\Pi
\\
&\le c \left[ \int_{\X} M^{q-1} d\Pi \right]^{1-\frac{1}{q p}}  R(t)^{(q-1)(1-\frac{1}{p})} 
\left[
\left[ \int_{\X} |\phi - \int_{\X} \phi d\Pi |^{qp} d\Pi \right]^{\frac{1}{qp}} 
+ \left[ \int_{\X} |\phi - \int_{\X} \phi d\Pi|^{qp} d\Pi \right]^{\frac{1}{p}}
\right]
\\
&\le c \left[ \int_{\X} M^{q-1} d\Pi \right]^{1-\frac{1}{q p}}  R(t)^{(q-1)(1-\frac{1}{p})}  
\left[
\left( 1 + \frac{1}{q} \right) \left[ 
\int_{\X} |\phi - \int_{\X} \phi d\Pi |^{qp} d\Pi \right]^{\frac{1}{p}} 
+ 1 - \frac{1}{q}
\right].
}
By Jensen's inequality,
$
\int_{\X} \left| \phi - \int_{\X} \phi d\Pi \right|^{qp} d\Pi
\le \int_{\X} d(x, y)^{qp} d\Pi(x) d\Pi(y)
$
and the proof is complete using $\phi = (f - \int_{\X} f d\Pi)/ \norm{f}_{Lip(d)}$.
In the reversible case, $R(\cdot)$ can be replaced with $R(2 \cdot)$ in \eqref{eq:ubconvergence} and the proof is similar.
\end{proof}

Theorem~\ref{theorem:nonreversible_bound} allows estimating the mean of unbounded Lipschitz functions that are important such as estimating the Bayesian posterior mean for AI applications.
Once $t$ is large enough based on the difficulty of estimation represented by the $SFR$, then the variance decays by a combination of the rate $R(\cdot)$ and the Lebesgue space norm where larger $p$ gives faster rates but generally larger constants.
When $q = 2$, the upper bound requires the integrability of $M$ where it is not guaranteed that $M$ is integrable, but often in many examples the condition can be shown.
For example, the integrability of $M$ can be established via a drift condition as we explore in applications later and generally holds when Assumption~\ref{assumption:convergence} gives geometric convergence based on drift and local coupling conditions.
Alternatively, the power of the rate $R(\cdot)^{q-1}$ could be replaced by $R(\cdot)^q$ if $\int_{\X} M^q d\Pi < \infty$ and this would also imply Assumption~\ref{assumption:convergence} directly implies control of the variance for bounded and Lipschitz functions, but this requirement can be limiting.

In the reversible case when $q = 2$, the rate $R(\cdot)$ can be replaced with an improvement $R(2 \cdot)$ and will yield equivalent conditions for a weak Poincare inequality \citep[Theorem 2.3]{rockner_weak_2001}.
In particular, Theorem~\ref{theorem:nonreversible_bound} gives sufficient conditions for reversible processes that for an initial probability measure $\nu$ with Lipschitz density $d\nu / d\Pi$, 
\eq{
&\chi^2( \nu \P_t \mid \Pi)
\le 24 \left( \int_{\X} M d\Pi \right)^{1-\frac{1}{2p}} R(2t)^{1-\frac{1}{p}} \norm{\frac{d\nu}{d\Pi} - 1}_{L_{2p}(\Pi)}^2
}
if $t$ is large enough depending on the SFR of the density $d\nu / d\Pi$.
This is a powerful result as weak convergence bounds can be extended to stronger convergence bounds bypassing tools based around local Doeblin conditions.
There are many potential applications for controlling the chi-square divergence to estimate bounds on the density $d\nu\P_t/d\Pi$ for Monte Carlo algorithms.
The Lipschitz assumption on the density has potentially many applications to improve mixing times in MCMC by relaxing some of the stricter ``warm-start" conditions where the density of the initial measure is assumed uniformly bounded.

This upper bound can also lead to lower bounds via the Paley-Zygmund inequality. In particular, for $\e \in (0, 1)$, Theorem~\ref{theorem:nonreversible_bound} allows us in the reversible case if $t$ is large enough that
\begin{align*}
\Pi\left( \P_t \left( \frac{d\nu}{d\Pi} \right) \ge 1 - \e \right)
%&\ge \frac{ \e^2 }{ \norm{ \P_t f - \Pi f }_{L_2(\Pi)}^2 + \e^2 }
%\\
&\ge \frac{
\e^2
}{ 
24 ( \int_{\X} M d\Pi )^{1-1/(2p)} R(2t)^{1-1/p} \norm{\frac{d\nu}{d\Pi} - 1}_{L_{2p}(\Pi)}^2 + \e^2.
}
\end{align*}

The technique in Theorem~\ref{theorem:nonreversible_bound} can be modified to find improved constants, but the requirement on the precise size of $t$ is less intuitive.

\begin{proposition}
\label{proposition:nonreversible_bound_improved}
Assume the transition kernels $(\P_t)_{t \in T}$ satisfy Assumption~\ref{assumption:convergence}.
For every $p > 1$ and for every Lipschitz function $f : \X \to \R$, if $t \in T$ is large enough so that
\[
t \ge R^{-1}\left[ \frac{ 
(p-1) SFR_{2p}(f)^{2p}
}{
\int_{\X} M d\Pi,
} 
\right]
\]
then
\eq{
\norm{ \P_t f - \int_{\X} f d\Pi }_{L_{2}(\Pi)}^2
\le c_p R(t)^{ 1-1/p }  \left( \int_{\X} M d\Pi \right)^{ 1-1/p } 
\norm{ f - \int_{\X} f d\Pi  }_{L_{2p}(\Pi)}^2
}
where $c_p = 8 \left[ (p-1)^{1/p} + \frac{ 1 }{ (p-1)^{1-1/p} } \right]$.
\end{proposition}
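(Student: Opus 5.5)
We redo the truncation argument from the proof of Theorem~\ref{theorem:nonreversible_bound} with $q = 2$, but keep track of the constants more carefully and choose the truncation level to balance the terms differently.

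\emph{Setup and splitting.} Normalize $\phi = (f - \int_{\X} f d\Pi)/\norm{f}_{\text{Lip}(d)}$, so that $\int_{\X}\phi\, d\Pi = 0$ and $\norm{\phi}_{\text{Lip}(d)} \le 1$. We may assume $\int_{\X} M d\Pi$ and $\norm{\phi}_{L_{2p}(\Pi)}$ are finite. For $r>0$ set $\psi_r = (-r)\vee(r\wedge\phi)$. As in \eqref{eq:separate_parts}, convexity and the $L_2(\Pi)$-contractivity of $\P_t$ (which follows from invariance) give
\[
\int_{\X} |\P_t\phi|^2 d\Pi \le 2\int_{\X} \Big|\phi-\psi_r+\int_{\X}\psi_r d\Pi\Big|^2 d\Pi + 2\int_{\X}\Big|\P_t\psi_r-\int_{\X}\psi_r d\Pi\Big|^2 d\Pi .
\]
For the tail term we argue exactly as before. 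We use $\int_{\X}\psi_r d\Pi=\int_{|\phi|>r}(\psi_r-\phi)\,d\Pi$, the bound $|\phi-\psi_r|\le |\phi| I_{\{|\phi|>r\}}$, Jensen, H\"older and Markov. This gives the bound $4\int_{\X}|\phi|^{2p}d\Pi\, / \, r^{2(p-1)}$, which I will sharpen to a constant $2$ where possible. One way is to note that $\phi-\psi_r+\int_{\X}\psi_r d\Pi$ is the centering of $\phi-\psi_r$, so its $L_2$ norm is at most $\norm{\phi-\psi_r}_{L_2(\Pi)}$. This removes the factor $2$ from Jensen.

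\emph{Convergence term.} Using $\norm{\psi_r}_\infty\le r$ and Assumption~\ref{assumption:convergence}, we get
\[
\int_{\X}\Big|\P_t\psi_r-\int_{\X}\psi_r d\Pi\Big|^2 d\Pi \le 2r\int_{\X} M d\Pi\, R(t)\,(1+r).
\]
Here is the key difference from the theorem. Instead of expanding $(1+r)$ into two separately optimized terms, I use the hypothesis on $t$ to guarantee that the optimal $r$ is at least $1$, so that $1+r\le 2r$. The convergence term is then at most $4r^2 \int_{\X} M d\Pi\, R(t)$.

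\emph{Optimization.} Combining the pieces, we have $\int_{\X}|\P_t\phi|^2 d\Pi \le A r^2 + B r^{-2(p-1)}$, with $A = 8\int_{\X} M d\Pi\, R(t)$ and $B$ a multiple of $\int_{\X}|\phi|^{2p} d\Pi$. We choose
\[
r^{2p} = (p-1)\,\frac{\int_{\X}|\phi|^{2p}d\Pi}{\int_{\X} M d\Pi\; R(t)},
\]
which is, up to constants, the minimizer. With this choice, $r\ge1$ is exactly the condition $R(t)\le (p-1)SFR_{2p}(f)^{2p}/\int_{\X} M d\Pi$. Since $R$ is strictly decreasing, this is the stated requirement on $t$. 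Substituting, both terms scale like $\big(\int_{\X} M d\Pi\, R(t)\big)^{1-1/p}\big(\int_{\X}|\phi|^{2p}d\Pi\big)^{1/p}$. The coefficients are $(p-1)^{1/p}$ from the $r^2$ term and $(p-1)^{-(1-1/p)}$ from the tail term, each multiplied by $8$. This yields $c_p$. Multiplying back by $\norm{f}_{\text{Lip}(d)}^2$ turns $\norm{\phi}_{L_{2p}}^2$ into $\norm{f-\int_{\X} f d\Pi}_{L_{2p}(\Pi)}^2$.

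\emph{Main obstacle.} The delicate step is bookkeeping the constants so that both coefficients come out exactly $8$. The factor $2$ from splitting, together with the factor $4$ in the convergence term ($2r$ times $1+r\le 2r$), gives $8$ for the first term. For the second term, one has to check that the tail constant after splitting is also at most $8$. It is, even with the cruder $4$ from the theorem's proof, since $2\cdot 4=8$. So the improved centering bound is not needed, and the argument closes.
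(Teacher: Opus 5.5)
Your proposal is correct and is essentially the paper's proof. You specialize \eqref{eq:ubconvergence} and \eqref{eq:ubtail} to $q=2$, use $r\ge 1$ to bound $r(1+r)\le 2r^2$ so that $\int_{\X}(\P_t\phi)^2\,d\Pi \le 8\int_{\X} M\,d\Pi\,R(t)\,r^2 + 8\int_{\X}|\phi|^{2p}d\Pi/r^{2(p-1)}$, and take $r^{2p}=(p-1)\int_{\X}|\phi|^{2p}d\Pi/(\int_{\X}M\,d\Pi\,R(t))$, whose requirement $r\ge 1$ is exactly the stated condition on $t$. Your side remark that $\phi-\psi_r+\int_{\X}\psi_r\,d\Pi$ is the centering of $\phi-\psi_r$ is valid and would improve the tail constant, but, as you note, it is not needed to obtain $c_p$.
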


\begin{proof}
Using \eqref{eq:ubconvergence} and \eqref{eq:ubtail} and if $r \ge 1$, we have
\begin{align}
\int_{\X} (\P_t \phi)^2 d\Pi
&\le 4 \int_{\X} M d\Pi R(t) r (1 +  r)
+ \frac{ 8 \int_{\X} |\phi|^{2p} d\Pi }{r^{2 (p-1)}}
\nonumber
\\
&\le 8 \int_{\X} M d\Pi R(t) r^2 + \frac{ 8 \int_{\X} |\phi|^{2p} d\Pi }{r^{2 (p-1)}}.
\label{eq:ub_nonoptimal}
\end{align}
We can choose $t$ large enough so that 
\[
r^2 = \left( 
\frac{ 
(p-1) \int_{\X} |\phi|^{2p} d\Pi  
}{
\int_{\X} M d\Pi R(t)
} 
\right)^{\frac{1}{p}}
\ge 1.
\]
Substituting this $r$ value in concludes the result.
\end{proof}

The convergence bounds become more difficult for functions $f$ that have small SFR. 
If instead we have $SFR_{2}(f) \ge 1$ uniformly bounded below, then taking the limit as $p \to \infty$ in Proposition~\ref{proposition:nonreversible_bound_improved} yields
\eq{
\norm{ \P_t f - \int_{\X} f d\Pi }_{L_{2}(\Pi)}^2
\le C R(t) \int_{\X} M d\Pi
\norm{ f - \int_{\X} f d\Pi  }_{\infty}^2
}
holds for some constant $C > 0$.
If the target measure has Gaussian like tails, that is, for some $\lambda \in (0, 1)$ and $x_0 \in \X$, $\int_{\X} \exp( \lambda d(x, x_0)^2 ) \Pi(dx) < \infty$ then we can obtain a stronger result in general.

\begin{theorem}
\label{theorem:nonreversible_bound_gaussian_tail}
Assume the transition kernels $(\P_t)_{t \in T}$ satisfy Assumption~\ref{assumption:convergence}.
For every $\lambda > 0$, there is an explicit constant $C_\lambda > 0$ such that for every Lipschitz function $f : \X \to \R$, 
\begin{align*}
\norm{ \P_t f - \int_{\X}  f d\Pi}_{L_2(\Pi)}^2
&\le
C_{\lambda}  R(t) \int_{\X} M d\Pi \left[ 1 + \log\left( \frac{ \norm{ \exp(\lambda d(\cdot, \cdot)) }_{L_2(\Pi \otimes \Pi)}^2 }{R(t)}\right) \right] \norm{f}_{\text{Lip}(d)}^2.
\end{align*}
\end{theorem}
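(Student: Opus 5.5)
The plan is to rerun the truncation argument from the proof of Theorem~\ref{theorem:nonreversible_bound} with $q=2$, but to control the tail with the exponential moment instead of Hölder/Markov. Normalize $\phi = (f - \int_{\X} f d\Pi)/\norm{f}_{\text{Lip}(d)}$ and let $\psi_r = (-r)\vee(r\wedge\phi)$. Write $K = \norm{\exp(\lambda d(\cdot,\cdot))}_{L_2(\Pi\otimes\Pi)}^2$, which satisfies $K \ge 1 \ge R(t)$. Then \eqref{eq:separate_parts} gives
\[
\int_{\X}|\P_t\phi|^2 d\Pi \le 2\int_{\X}|\phi-\psi_r|^2d\Pi\cdot 4 + 2\int_{\X}\Big|\P_t\psi_r-\int_{\X}\psi_r d\Pi\Big|^2 d\Pi .
\]

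\emph{Tail term.} On $\{|\phi|>r\}$, the elementary inequality $s^2\le (2/\lambda^2)e^{\lambda s}$ applied with $s = |\phi|-r$ gives $(|\phi|-r)^2 \le (2/\lambda^2)e^{-\lambda r}e^{\lambda|\phi|}$. Next, $|\phi(x)| = |\int(\phi(x)-\phi(y))\Pi(dy)| \le \int d(x,y)\Pi(dy)$, so Jensen's inequality for $\exp$ yields $\int e^{\lambda|\phi|}d\Pi \le \int\int e^{\lambda d}\,d\Pi\otimes d\Pi \le \norm{e^{\lambda d}}_{L_2(\Pi\otimes\Pi)}\le K$. Hence the tail term is at most $(16/\lambda^2)Ke^{-\lambda r}$. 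Choosing $r = \lambda^{-1}\log(K/R(t))\ge 0$ makes it $(16/\lambda^2)R(t)$. This is of the required order, because $\int M d\Pi\ge1$.

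\emph{Convergence term, the main obstacle.} Bounding one factor of $|\P_t\psi_r-\Pi\psi_r|^2$ by $2r$ and the other by $M R(t)(1+r)$, as in \eqref{eq:ubconvergence}, gives $2R(t)\int M d\Pi\, r(1+r)$. With the $r$ above this is $\log^2(K/R(t))$, one logarithm too many. To get the stated single logarithm, I would stop bounding the first factor by $\norm{\psi_r}_\infty$. Instead I would split the difference into unit layers $\eta_k=\psi_{k+1}-\psi_k$, $0\le k<r$. Each layer is in the BL unit ball up to a factor $2$ and satisfies $|\eta_k|\le I_{\{|\phi|>k\}}$. Hence each layer obeys two bounds at once: $|\P_t\eta_k-\Pi\eta_k|\le 2MR(t)$, and $|\P_t\eta_k-\Pi\eta_k|\le \P_t I_{\{|\phi|>k\}}+\Pi(|\phi|>k)$, where the last probability is at most $Ke^{-\lambda k}$.

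Expanding the square as $\sum_{j,k}\int(\P_t\eta_j-\Pi\eta_j)(\P_t\eta_k-\Pi\eta_k)d\Pi$, I would bound each cross term by $2R(t)\int M\,\big(\P_t I_{\{|\phi|>j\vee k\}}+Ke^{-\lambda (j\vee k)}\big)d\Pi$. The hope is that the sum over $j\vee k=m$ of these terms contributes roughly $R(t)\int M d\Pi\,(1\wedge m K e^{-\lambda m}/R(t))$-type quantities. Summing such quantities over $m$ would give $R(t)\int M d\Pi\,[1+\lambda^{-1}\log(K/R(t))]$ rather than its square. Making this rigorous requires controlling $\int M\,\P_t I_{\{|\phi|>m\}}d\Pi$. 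I would first try to handle this term by using $M\ge1$ together with the invariance $\Pi\P_t=\Pi$ on the complementary factor. If that fails, I would fall back to balancing the two contributions with a different choice of $r$. Collecting the tail and convergence terms and multiplying by $\norm{f}_{\text{Lip}(d)}^2$ then gives the claim, with $C_\lambda$ depending only on $\lambda$.

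I expect this layer-cancellation step to be where the argument either works or breaks. The naive estimate unavoidably produces $r^2$, while the statement asserts a single factor of $\log(K/R(t))$.
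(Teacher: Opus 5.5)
\paragraph{The gap.} The gap is the step you flag yourself, and it cannot be closed. If all you have is the exponential moment $K=\iint e^{2\lambda d}\,d\Pi\otimes\Pi$, the $\log^2$ from the naive truncation is actually attained. So neither the layer decomposition nor a rebalancing of $r$ can remove it.

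The layer argument breaks where you suspect. The cross terms need $\int_{\X} M\,\P_t I_{\{|\phi|>m\}}\,d\Pi$ to be small. Invariance of $\Pi$ does not help once the weight $M$ sits inside the integral, and $M$ can be of order $1/R(t)$ exactly on $\{|\phi|>m\}$, where the process is slow.

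Here is an example. Let $\X=[0,\infty)$ with $\Pi=\mathrm{Exp}(1)$. Take the reversible jump process that, at rate $a(x)$, jumps to a fresh draw from $\nu\propto a\,\Pi$, with $a=1$ on $[0,A)$, $a=1/t_0$ on $[A,\infty)$, and $e^{-A}=(1+t_0)^{-\kappa}$.
\begin{itemize}
\item A two-layer computation shows that Assumption~\ref{assumption:convergence} holds with $R(t)=(1+t)^{-\kappa}$. One can take $M=O_\kappa(1)$ on $[0,A)$ and $M=O_\kappa((1+t_0)^{\kappa})$ on $[A,\infty)$, so $\int_{\X}M\,d\Pi=O_\kappa(1)$ uniformly in $t_0$.
\item For $f(x)=x$ we have $\P_{t_0}f(x)\ge e^{-1}x$ on $[A,\infty)$. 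Hence $\norm{\P_{t_0}f-\int_{\X}f\,d\Pi}_{L_2(\Pi)}^2\gtrsim A^2e^{-A}= R(t_0)\log^2(1/R(t_0))$.
\item The claimed right-hand side is only $O(R(t_0)\log(1/R(t_0)))$, for any fixed $\lambda<1/2$ (so that $K<\infty$).
\end{itemize}
Stacking infinitely many such layers in one chain defeats even a chain-dependent constant.

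\paragraph{The missing idea.} What you are missing is the hypothesis the paper's proof actually uses: Gaussian tails, not exponential tails.
\begin{itemize}
\item The sentence before the theorem assumes $\int_{\X}\exp(\lambda d(x,x_0)^2)\,\Pi(dx)<\infty$.
\item The proof works with $\int_{\X}\exp(\lambda|\phi|^2)\,d\Pi$. Your Jensen argument bounds this by $K'=\iint e^{\lambda d^2}\,d\Pi\otimes\Pi$.
\item So the $\exp(\lambda d)$ in the displayed statement should be read as $\exp(\lambda d^2)$.
\end{itemize}

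With Gaussian tails the tail term decays like $e^{-c\lambda r^2}$. The paper bounds the H\"older--Markov tail factor in \eqref{eq:ubtail} by $\frac{p}{\lambda^2}e^{-\lambda(1-1/p)r^2}\int_{\X}e^{\lambda|\phi|^2}d\Pi$. It then chooses $r^2=\frac{1}{\lambda(1-1/p)}\log\frac{2(p-1)\int_{\X}e^{\lambda|\phi|^2}d\Pi}{\lambda R(t)\int_{\X}M\,d\Pi}$. The convergence term $4R(t)\int_{\X}M\,d\Pi\,(r+r^2)$ then carries a single logarithm, because now $r^2$ rather than $r$ is logarithmic.

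Your own computation closes the same way:
\begin{itemize}
\item Replace $s^2\le(2/\lambda^2)e^{\lambda s}$ by $\phi^2\le\frac{2}{e\lambda}e^{\lambda\phi^2/2}$. This makes the tail term at most $\frac{16}{e\lambda}e^{-\lambda r^2/2}K'$.
\item Take $r^2=\frac{2}{\lambda}\log(K'/R(t))$.
\end{itemize}
No layer decomposition is needed.
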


\begin{proof}
Let $\phi : \X \to \R$ with $\norm{\phi}_{\text{Lip}(d)} = 1$.
By Markov's inequality, for $p > 1$,
\begin{align*}
\left( \int_{\X} |\phi|^{2p} d\Pi \right)^{1/p} \Pi(|\phi| > r)^{1 - 1/p}
&\le  \frac{p}{\lambda^2} \left( \int_{\X} \exp( \lambda |\phi|^{2}) d\Pi \right)^{1/p} \Pi(|\phi| > r)^{1 - 1/p}
\\
&\le \frac{p}{\lambda^2} \int_{\X} \exp( \lambda |\phi|^{2}) d\Pi \exp(-r^2 \lambda (1-1/p)).
\end{align*}
Combining the previous bounds \eqref{eq:ubconvergence} and \eqref{eq:ubtail},
\eq{
&\int_{\X} (\P_t \phi)^2 d\Pi
\le 4 \int_{\X} M d\Pi R(t) (r + r^2)
+ \frac{8 p}{\lambda^2} \exp(-r^2 \lambda (1-1/p)) \int_{\X} \exp( |\phi|^{2}) d\Pi 
}
holds for all $r > 0$.
We can choose $p$ large enough so that $r^2 \ge 0$ with
\[
r^2 = \frac{1}{\lambda( 1-1/p )} \log\left( \frac{ 2 (p - 1) \int_{\X} \exp( \lambda |\phi|^{2}) d\Pi }{ \lambda \int_{\X} M d\Pi R(t) } \right).
\]
%Pluggin this in and using $\sqrt{x} \le x/2 + 1/2$,
Plugging this in, there is a constant $C_\lambda > 0$ where
\eq{
\int_{\X} (\P_t \phi)^2 d\Pi
\le 
C_\lambda  R(t) \int_{\X} M d\Pi\left\{ \log\left( \int_{\X} \exp( \lambda |\phi|^{2}) d\Pi \right)
+ 1
\right\}.
}
\end{proof}

The weak convergence rate in Assumption~\ref{assumption:convergence} characterizes the $L_2(\Pi)$ rate of convergence for unbounded Lipschitz functions even in the non-reversible case as the following equivalence shows.

\begin{proposition}
\label{proposition:equivalence}
Let $(\P_t)_{t \in T}$ be Markov transition kernels with invariant measure $\Pi$.
Assume $\int_{\X} d(x, x_0)^{2p} \Pi(dx) < \infty$ for some $p > 1$ and some $x_0 \in \X$.
Then for a rate function $R : T \to (0, 1]$ strictly decreasing to $0$, the following are equivalent:
\begin{enumerate}
\item 
There are constants $C_1 > 0$ and $\kappa_1 \in (0, 1]$ such that for all $t \in T$
\begin{align}
\norm{ \P_t f - \int_{\X} f d\Pi }_{L_1(\Pi)}
\le C_1 R(t)^{\kappa_1} \left( \norm{f}_{\infty} + \norm{f}_{\text{Lip}(d)} \right)
\label{equiv:weak_convergence}
\end{align}
holds for all bounded Lipschitz functions $f : \X \to \R$.

\item 
There are constants $C_2 > 0$ and $\kappa_2 \in (0, 1]$ such that for all $t \in T$
\begin{align}
\norm{ \P_t f - \int_{\X} f d\Pi }_{L_2(\Pi)}^2
\le C_2 R(t)^{\kappa_2} \norm{f}_{\text{Lip}(d)}^2
\label{equiv:var_convergence}
\end{align}
holds for all Lipschitz functions $f : \X \to \R$.
\end{enumerate}

\end{proposition}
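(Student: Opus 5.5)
We prove the two implications separately. The direction (2)$\Rightarrow$(1) is elementary. The direction (1)$\Rightarrow$(2) reruns the truncation argument from the proof of Theorem~\ref{theorem:nonreversible_bound}, with the pointwise bound of Assumption~\ref{assumption:convergence} replaced by the integrated bound \eqref{equiv:weak_convergence}.

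\emph{(2)$\Rightarrow$(1).} Let $f$ be bounded Lipschitz. By Cauchy--Schwarz and \eqref{equiv:var_convergence},
\[
\norm{\P_t f - \textstyle\int f d\Pi}_{L_1(\Pi)} \le \norm{\P_t f - \textstyle\int f d\Pi}_{L_2(\Pi)} \le C_2^{1/2} R(t)^{\kappa_2/2} \norm{f}_{\text{Lip}(d)}.
\]
The right-hand side is at most $C_2^{1/2} R(t)^{\kappa_2/2}(\norm{f}_\infty + \norm{f}_{\text{Lip}(d)})$, so (1) holds with $C_1 = C_2^{1/2}$ and $\kappa_1 = \kappa_2/2 \in (0,1]$.

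\emph{(1)$\Rightarrow$(2).} Take $\phi$ Lipschitz with $\int \phi\, d\Pi = 0$ and $\norm{\phi}_{\text{Lip}(d)} \le 1$, and truncate as before: $\psi_r = (-r)\vee(r\wedge\phi)$. The splitting \eqref{eq:separate_parts} with $q=2$ uses only convexity and invariance of $\Pi$, so it still holds. The tail term is controlled exactly as in \eqref{eq:ubtail} by $8\int|\phi|^{2p}d\Pi / r^{2(p-1)}$. This quantity is finite: since $|\phi(x)| \le |\phi(x_0)| + d(x,x_0)$ and $\int\phi\, d\Pi = 0$, Jensen's inequality gives $\int |\phi|^{2p} d\Pi \le \norm{d}^{2p}_{L_{2p}(\Pi\otimes\Pi)}$. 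By the triangle inequality this is at most $2^{2p}\int d(x,x_0)^{2p}\Pi(dx) =: K < \infty$.

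For the bounded term we avoid the pointwise $M$ and use $\norm{\P_t\psi_r - \int\psi_r d\Pi}_\infty \le 2r$, so that
\[
\int \Big|\P_t\psi_r - \int\psi_r d\Pi\Big|^2 d\Pi \le 2r \Big\|\P_t\psi_r - \int\psi_r d\Pi\Big\|_{L_1(\Pi)} \le 2r\, C_1 R(t)^{\kappa_1}(r+1).
\]
The last step is \eqref{equiv:weak_convergence} applied to $\psi_r$. Together,
\[
\norm{\P_t\phi}_{L_2(\Pi)}^2 \le 4C_1 R(t)^{\kappa_1}(r+r^2) + \frac{8K}{r^{2(p-1)}}.
\]

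It remains to choose $r$. For $r\ge1$ the first term is at most $8C_1R(t)^{\kappa_1}r^2$, and we take $r^2 = R(t)^{-\kappa_1/p}\ge 1$, which is allowed since $R\le 1$. The first term then becomes $8C_1R(t)^{\kappa_1(1-1/p)}$. The second term becomes $8K R(t)^{\kappa_1(p-1)/p}$, the same power. This yields (2) with $\kappa_2 = \kappa_1(1-1/p) \in (0,1]$ and $C_2 = 8(C_1+K)$, after rescaling $\phi = (f-\int f d\Pi)/\norm{f}_{\text{Lip}(d)}$.

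The main point to get right is the bounded term. It must be handled via the $L_1(\Pi)$ bound times the sup bound $2r$; the pointwise-$M$ route of Theorem~\ref{theorem:nonreversible_bound} is not available here. The moment assumption is what makes the tail constant $K$ uniform over all $1$-Lipschitz $\phi$.
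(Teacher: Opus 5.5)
Your proof is correct and takes essentially the same approach as the paper. For (1)$\Rightarrow$(2) the paper likewise reruns the truncation argument of Theorem~\ref{theorem:nonreversible_bound}: it bounds the bounded part by $2\norm{\psi_r}_{\infty}$ times the $L_1(\Pi)$ estimate \eqref{equiv:weak_convergence}, reaching the exponent $(1-1/p)\kappa_1$, and it gets (2)$\Rightarrow$(1) from Jensen's inequality; you additionally make explicit the uniform choice $r^2 = R(t)^{-\kappa_1/p}$ and the moment constant $K$, which the paper leaves implicit.
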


\begin{proof}
For \eqref{equiv:weak_convergence} implies \eqref{equiv:var_convergence}, the proof of Theorem~\ref{theorem:nonreversible_bound} \eqref{eq:ubconvergence} holds under the condition \eqref{equiv:weak_convergence} so that for some constant $C_3 > 0$
\[
\sup_{ \norm{f}_{\text{Lip}(d)} \le 1 } \norm{ \P_t f - \int_{\X} f d\Pi }_{L_2(\Pi)}^2
\le C_3 R(t)^{(1-1/p) \kappa_1}.
\]
The converse follows by Jensen's inequality
\[
\sup_{ \norm{f}_{\text{Lip}(d)} \le 1 } \norm{ \P_t f - \int_{\X} f d\Pi }_{L_1(\Pi)}
\le \sup_{ \norm{f}_{\text{Lip}(d)} \le 1 } \norm{ \P_t f - \int_{\X} f d\Pi }_{L_2(\Pi)}
\le C_2^{1/2} R(t)^{\kappa_2/2}.
\]
\end{proof}

\subsection{Applications to central limit theorems}
\label{section:clt}

Theorem~\ref{theorem:nonreversible_bound} has many applications to controlling the variance of averages and central limit theorems for Lipschitz functions.
Most importantly, this does not require commonly used techniques based on total variation to control the limiting variance for the central limit theorem \cite{jones:2004}.
We focus on the discrete case for simplicity.

\begin{corollary}
\label{corollary:variance_bounds}
Let $(X_t)_{t \in \Z_+}$ be a stationary Markov process with invariant measure $\Pi$ and transition kernels $(\P_t)_{t \in \Z_+}$ satisfying Assumption~\ref{assumption:convergence}. 
For some $p > 1$, define $r_{p, \infty}(m) = \int_m^{\infty} R(u)^{\frac{1}{2} \left( 1-1/p \right)} du$ for every $m \in \Z_+$.
For all centered Lipschitz functions $f : \X \to \R$ with $\int_{\X} f d\Pi = 0$, the following hold:

\begin{enumerate}
\item
If
$
n - 1 \ge t \ge R^{-1}\left[ 
SFR_{2p}(f)^{2p}
\right], then
$ 
\eq{
&\E\left( \left| \frac{1}{n} \sum_{s = 0}^{n-1} f(X_{s}) \right|^2 \right)
\le \norm{ f }_{L_{2p}(\Pi)}^2 \frac{2}{n} \left\{ t
+  2 \sqrt{6} \left[ \int_{\X} M d\Pi \right]^{\frac{1}{2} ( 1-\frac{1}{2p} )} 
\left( R(t)^{\frac{1}{2} \left( 1-\frac{1}{p} \right)} + r_{p, \infty}(m)  \right)
\right\}.
}

\item For every $n \in \Z_+$,
\eq{
\E\left( \left| \frac{1}{n} \sum_{s = 0}^{n-1} f(X_{s}) \right|^2 \right)
\le \frac{1}{n} \norm{ f }_{\text{Lip}(d)}^2 K_{\Pi}
\left[ 1 + 2 C_{2, \Pi}^{1/2} \left( \int_{\X} M d\Pi \right)^{\frac{1}{2} (1 - \frac{1}{2p})} r_{p, \infty}(0) 
\right]
}
where $K_{\Pi} = \norm{d}_{L_2(\Pi \otimes \Pi)} \vee \norm{d}_{L_2(\Pi \otimes \Pi)}^2$ and $C_{2, \Pi}$ is defined in Theorem~\ref{theorem:nonreversible_bound}.

\end{enumerate}
\end{corollary}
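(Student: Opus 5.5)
Expand the second moment of the average using stationarity. Write
\[
\E\left( \left| \frac{1}{n} \sum_{s=0}^{n-1} f(X_s) \right|^2 \right)
= \frac{1}{n^2} \sum_{s,u=0}^{n-1} \E[f(X_s) f(X_u)]
\le \frac{1}{n} \left[ \norm{f}_{L_2(\Pi)}^2 + 2 \sum_{k=1}^{n-1} \left| \int_{\X} f \P_k f \, d\Pi \right| \right].
\]
Each lag covariance is bounded by splitting $\P_k = \P_{\lceil k/2\rceil}\P_{\lfloor k/2 \rfloor}$ when reversibility is available. In the general case, use only Cauchy--Schwarz:
\[
\left| \int_{\X} f \P_k f \, d\Pi \right| \le \norm{f}_{L_2(\Pi)} \norm{\P_k f}_{L_2(\Pi)} .
\]
The $L_2(\Pi)$ norm $\norm{\P_k f}_{L_2(\Pi)}$ is then controlled by Theorem~\ref{theorem:nonreversible_bound} with $q = 2$. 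Since $\int_{\X} f d\Pi = 0$, that theorem gives
\[
\norm{\P_k f}_{L_2(\Pi)} \le c_1^{1/2} \left[ \int_{\X} M d\Pi \right]^{\frac12(1-\frac{1}{2p})} R(k)^{\frac12(1-\frac1p)} \norm{f}_{L_{2p}(\Pi)} .
\]
This holds once $k \ge R^{-1}[SFR_{2p}(f)^{2p}]$, and $c_1^{1/2} \le \sqrt{24} = 2\sqrt6$ since $c_1 \le 2^3 \cdot 3$.

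For part 1, fix $t$ as in the hypothesis and split the lag sum at $t$. For lags $k < t$, bound the covariance trivially by $\norm{f}_{L_2(\Pi)}^2 \le \norm{f}_{L_{2p}(\Pi)}^2$ (Jensen). Together with the diagonal term, this contributes at most of order $t \norm{f}_{L_{2p}(\Pi)}^2$. For $k \ge t$, apply the displayed bound and use $\norm{f}_{L_2} \le \norm{f}_{L_{2p}}$. This leaves
\[
2\sqrt6 \left[\int_{\X} M d\Pi\right]^{\frac12(1-\frac1{2p})} \norm{f}_{L_{2p}(\Pi)}^2 \sum_{k \ge t} R(k)^{\frac12(1-\frac1p)} .
\]
Since $R$ is decreasing, the sum is at most $R(t)^{\frac12(1-\frac1p)} + \int_t^\infty R(u)^{\frac12(1-\frac1p)} du$ by the integral comparison. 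This gives the stated form with $m = t$, and I read the $m$ in the statement as this splitting index. Collecting the factor $2/n$ yields the claim.

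For part 2, use instead the second statement of Theorem~\ref{theorem:nonreversible_bound}, which is valid for all $k$:
\[
\norm{\P_k f}_{L_2(\Pi)} \le C_{2,\Pi}^{1/2} \left[ \int_{\X} M d\Pi \right]^{\frac12(1-\frac1{2p})} R(k)^{\frac12(1-\frac1p)} \norm{f}_{\text{Lip}(d)} .
\]
For the other factor, Jensen gives $\norm{f}_{L_2(\Pi)} \le \norm{d}_{L_2(\Pi\otimes\Pi)} \norm{f}_{\text{Lip}(d)}$, since $|f(x) - f(y)| \le \norm{f}_{\text{Lip}(d)} d(x,y)$ and $f$ is centered. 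Bound the diagonal term by $\norm{d}_{L_2}^2 \norm{f}_{\text{Lip}}^2$. Bound the off-diagonal sum by $2 \norm{d}_{L_2} C_{2,\Pi}^{1/2} [\int M d\Pi]^{\cdots} \norm{f}_{\text{Lip}}^2 \sum_{k\ge1} R(k)^{\frac12(1-\frac1p)}$. Since $R$ is monotone, $\sum_{k \ge 1} R(k)^{\cdots} \le \int_0^\infty R(u)^{\cdots} du = r_{p,\infty}(0)$. Both $\norm{d}_{L_2}$ and $\norm{d}_{L_2}^2$ are dominated by $K_\Pi$, which gives the claimed bound.

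The step needing the most care is the bookkeeping of constants and the threshold in part 1. Theorem~\ref{theorem:nonreversible_bound} applies only for lags beyond $R^{-1}[SFR_{2p}(f)^{2p}]$, so the hypothesis $t \ge R^{-1}[\,\cdot\,]$ must be used exactly to license the tail lags. The condition $n-1 \ge t$ guarantees that the split is meaningful. The integral comparison $\sum_{k\ge t} R(k)^{a} \le R(t)^a + \int_t^\infty R^a$ requires only monotonicity of $R$, which Assumption~\ref{assumption:convergence} provides.
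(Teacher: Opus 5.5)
Your proposal is correct and follows essentially the same route as the paper: stationarity plus Cauchy--Schwarz on each lag, then splitting the lag sum at $t$. Below $t$ you use the trivial bound, above $t$ the first part of Theorem~\ref{theorem:nonreversible_bound} with $c_1^{1/2}\le 2\sqrt6$, and for part 2 you use the theorem's second part together with $\norm{f}_{L_2(\Pi)}\le\norm{d}_{L_2(\Pi\otimes\Pi)}\norm{f}_{\text{Lip}(d)}$. Your reading of $m$ as the splitting index $t$ also matches the paper's proof, which produces $\int_t^{n-1}R(u)^{\frac12(1-\frac1p)}du\le r_{p,\infty}(t)$ at that step.
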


\begin{proof}
Let $f : \X \to \R$ with $\phi = f - \int_{\X} f d\Pi$ and write $\bar{\phi} = n^{-1} \sum_{s = 0}^{n-1} \phi(X_{s})$.
Since we have assumed a stationary process, using Hölder's inequality,
\eq{
\E\left(  \left| \bar{\phi} \right|^2 \right)
&\le \frac{1}{n} \norm{ \phi }_{L_2(\Pi)}^2 + \frac{2}{n} \sum_{k = 1}^{n-1} \int_{\X} \left| \P_k \phi \right| \left| \phi \right| d\Pi
\\
&\le \frac{1}{n} \norm{ \phi }_{L_{2}(\Pi)}^2 + \frac{2}{n} \norm{ \phi }_{L_{2}(\Pi)} \sum_{k = 1}^{n-1} \norm{ \P_k \phi }_{L_{2}(\Pi)}.
}
We can use the first result of Theorem~\ref{theorem:nonreversible_bound} with constant $c_1 \le 24$ to get 
\eq{
\E\left(  \left| \bar{\phi} \right|^2 \right)
&\le \frac{1}{n} \norm{ \phi }_{L_{2}(\Pi)}^2 + \frac{2}{n} \norm{ \phi }_{L_{2}(\Pi)} \sum_{k = 1}^{t-1} \norm{ \P_k \phi }_{L_{2}(\Pi)}
+ \frac{2}{n} \norm{ \phi }_{L_{2}(\Pi)} \sum_{k = t}^{n-1} \norm{ \P_k \phi }_{L_{2}(\Pi)}
\\
&\le \frac{2 t}{n} \norm{ \phi }_{L_{2 p}(\Pi)}^2
+ \frac{2}{n} \norm{ \phi }_{L_{2 p}(\Pi)} \sum_{k = t}^{n-1} \norm{ \P_k \phi }_{L_{2}(\Pi)}
\\
&\le \norm{ \phi }_{L_{2p}(\Pi)}^2 \frac{2}{n} \left\{ t
+  c_1^{1/2} \left[ \int_{\X} M d\Pi \right]^{\frac{1}{2} ( 1-\frac{1}{2p} )} 
\left( R(t)^{\frac{1}{2} \left( 1-\frac{1}{p} \right)} + \int_t^{n-1} R(u)^{\frac{1}{2} \left( 1-\frac{1}{p} \right)} du \right)
\right\}.
}
We can use the second result of Theorem~\ref{theorem:nonreversible_bound} with constant $C_{2, \Pi}$ to get
\eq{
\E\left(  \left| \bar{\phi} \right|^2 \right)
&\le \frac{1}{n} \norm{ \phi }_{\text{Lip}(d)}^2 \norm{d}_{L_2(\Pi \otimes \Pi)}^2 + \frac{2}{n} \norm{ \phi }_{\text{Lip}(d)} \norm{d}_{L_2(\Pi \otimes \Pi)} \sum_{k = 1}^{n-1} \norm{ \P_k \phi }_{L_{2}(\Pi)}
\\
&\le \frac{1}{n} \norm{ \phi }_{\text{Lip}(d)}^2 K_{\Pi}
\left[ 1 + 2 C_{2, \Pi}^{1/2} \left( \int_{\X} M d\Pi \right)^{\frac{1}{2} (1 - \frac{1}{2p})} \int_0^{n-1} R(u)^{ \frac{1}{2} \left( 1-\frac{1}{p} \right) } du
\right].
} 
%Here we used the upper bound $\norm{ \phi }_{L_2(\Pi)} \le \norm{ \phi }_{\text{Lip}(d)} \norm{d}_{L_2(\Pi \otimes \Pi)}$.
\end{proof}

When the first bound is finite, then the variance is $L_{2p}(\Pi)$ norm bounded by the function $f$ which is a stronger condition than $L_{2}(\Pi)$ bounded as observed in \cite{roberts_variance_2008} in the reversible geometrically converging case.
The proof for continuous time case is similar or follows directly using the sum
$
\int_0^n f(X_s) ds = \sum_{k = 1}^n \int_{k-1}^{k} f(X_s) ds.
$
Corollary~\ref{corollary:variance_bounds} has direct applications to parallel MCMC simulations with independent initializations and also applications to concentration with arbitrary initial probability measures $\nu$ such as for $\e, q > 0$ 
\eq{
\Prob\left( \left| \frac{1}{n} \sum_{s = 1}^n f(X_{s}) - \int_{\X} f d\Pi \right| \ge \e \right)
&\le \norm{\frac{d\nu}{d\Pi}}_{L_q(\Pi)} \left( \e^{-2} \Var\left( \frac{1}{n}  \sum_{s = 1}^n  f(X_{s}) \right) \right)^{1-1/q}.
}

In particular, the bound on the variance leads to powerful results such as central limit theorems for unbounded Lipschitz functions where Proposition~\ref{corollary:variance_bounds} can be used in combination with a stationary functional central limit theorem \cite{dedecker:rio:2000}.
The following result can be modified for continuous time and importantly, the limiting variance can be upper bounded explicitly.

\begin{corollary}
\label{corollary:clt}
Let $(X_t)_{t \in \Z_+}$ be a stationary Markov process with invariant measure $\Pi$ and transition kernels $(\P_t)_{t \in \Z_+}$ satisfying Assumption~\ref{assumption:convergence}. 
Assume $\int_{\X} M d\Pi < \infty$ and for some $p > 1$
\[
\int_0^{\infty} R(u)^{\frac{1}{2} \left( 1-1/p \right)} du < \infty.
\]
Then for every Lipschitz function $f : \X \to \R$ such that $\norm{f}_{L_{2p}(\Pi)} < \infty$, 
\[
\sigma^2(f) = \lim_{n \to \infty} \Var\left( \frac{1}{\sqrt{n}}  \sum_{s = 0}^{n-1}  f(X_{s}) \right)
\]
is upper bounded and for every $z \in \R$, $\lim_{n \to \infty} \Prob\left( \frac{1}{\sqrt{n}} \sum_{s = 0}^{n-1}  \left( f(X_{s}) - \int_{\X} f d\Pi \right) \le z \right) = \Phi(z)$ where $\Phi$ is a Gaussian distribution function with mean 0 and variance $\sigma^2(f)$.
\end{corollary}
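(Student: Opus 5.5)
The plan is to reduce the statement to a stationary CLT for Markov chains of Kipnis--Varadhan / Maxwell--Woodroofe / Dedecker--Rio type, using Theorem~\ref{theorem:nonreversible_bound} to supply the needed summability of correlations. Replacing $f$ by $\phi = f - \int_{\X} f d\Pi$, which is still Lipschitz with the same constant, we may assume $f$ is centered. The hypothesis $\norm{f}_{L_{2p}(\Pi)} < \infty$ then gives $\norm{\phi}_{L_{2p}(\Pi)} < \infty$, so $\phi \in L_2(\Pi)$ and $SFR_{2p}(\phi)$ is finite.

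\textbf{Step 1: summable correlations.} Fix $t_0 = R^{-1}[SFR_{2p}(\phi)^{2p}]$, or $t_0 = 0$ if the argument exceeds $R(0)$. For $k \ge t_0$, the first part of Theorem~\ref{theorem:nonreversible_bound} with $q = 2$ gives
\[
\norm{\P_k \phi}_{L_2(\Pi)} \le \sqrt{24}\left[\int_{\X} M d\Pi\right]^{\frac12(1-\frac{1}{2p})} R(k)^{\frac12(1-\frac1p)} \norm{\phi}_{L_{2p}(\Pi)}.
\]
For $k < t_0$, we use the trivial bound $\norm{\P_k\phi}_{L_2(\Pi)} \le \norm{\phi}_{L_2(\Pi)}$. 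Since $R$ is decreasing, the sum $\sum_k R(k)^{\frac12(1-1/p)}$ is bounded by $R(0)^{\frac12(1-1/p)} + \int_0^\infty R(u)^{\frac12(1-1/p)}du < \infty$. Hence $\sum_{k\ge 0}\norm{\P_k\phi}_{L_2(\Pi)} < \infty$.

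\textbf{Step 2: the limiting variance.} By stationarity,
\[
\Var\Big(n^{-1/2}\sum_{s<n}\phi(X_s)\Big) = \norm{\phi}_{L_2(\Pi)}^2 + 2\sum_{k=1}^{n-1}\Big(1-\frac kn\Big)\int_{\X} \phi\, \P_k\phi\, d\Pi .
\]
By Cauchy--Schwarz and Step 1, the series $\sum_k \int \phi\P_k\phi\, d\Pi$ converges absolutely. Dominated convergence (Kronecker/Ces\`aro) then shows that $\sigma^2(f)$ exists and equals $\norm{\phi}_{L_2}^2 + 2\sum_{k\ge1}\int \phi\P_k\phi\, d\Pi$. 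The explicit upper bound follows from Corollary~\ref{corollary:variance_bounds}(1): multiply by $n$ and let $n\to\infty$ with $t = \lceil t_0\rceil$ fixed.

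\textbf{Step 3: the CLT.} Here I would invoke the Gordin/Maxwell--Woodroofe route. Since $\sum_k \norm{\P_k\phi}_{L_2(\Pi)} < \infty$, the function $g = \sum_{k\ge0}\P_k\phi$ converges in $L_2(\Pi)$ and solves the Poisson equation $g - \P_1 g = \phi$. Then
\[
\sum_{s<n}\phi(X_s) = \sum_{s=1}^{n}\big[g(X_s) - \P_1 g(X_{s-1})\big] + g(X_0) - g(X_n).
\]
The first sum consists of stationary ergodic square-integrable martingale differences; ergodicity follows from uniqueness of $\Pi$. The remainder divided by $\sqrt n$ tends to $0$ in $L_2$. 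The Billingsley--Ibragimov martingale CLT then gives convergence to $N(0,\sigma^2)$, with $\sigma^2 = \norm{g}_{L_2}^2 - \norm{\P_1 g}_{L_2}^2$, which agrees with the limit from Step 2. Alternatively, one can cite \cite{dedecker:rio:2000} directly: its condition $\sum_k \int |\phi\,\P_k\phi|\, d\Pi<\infty$ is immediate from Step 1 via Cauchy--Schwarz.

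The main obstacle I expect is the bookkeeping in Step 1: handling the threshold $t_0$ and the case where $SFR_{2p}(\phi)^{2p}$ lies outside the range of $R$. This is harmless because only the tail of the series matters. Beyond that, the proof just assembles earlier results with the standard martingale CLT. If $\sigma^2(f) = 0$, the limit is degenerate, and $\Phi$ is interpreted as the point mass at $0$; the convergence then holds at $z \neq 0$.
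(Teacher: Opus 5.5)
Your proposal is correct and follows essentially the paper's route. The paper gives no separate proof; it only indicates that the bounds behind Corollary~\ref{corollary:variance_bounds} (Theorem~\ref{theorem:nonreversible_bound}(1) with $q=2$, which yields $\sum_k \norm{\P_k\phi}_{L_2(\Pi)}<\infty$) should be combined with the stationary CLT of \cite{dedecker:rio:2000}. That is exactly your alternative in Step 3, and your Gordin decomposition with the Billingsley--Ibragimov martingale CLT is a self-contained substitute for that black box. You also correctly make explicit two points the paper glosses over: the Dedecker--Rio condition is verified by the summability of $\norm{\P_k\phi}_{L_2(\Pi)}$, not merely by the $O(1/n)$ variance bound, and convergence at $z=0$ can fail when $\sigma^2(f)=0$.
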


\section{Applications}
\label{section:applications}

\subsection{Applications to the chi-square convergence of infinite-dimensional Metropolis-Hastings}

Markov processes on infinite dimensional spaces can serve as a model for understanding the scaling of high-dimensional Markov process on Euclidean spaces.
Let $\H$ be a separable Hilbert space with norm $\norm{\cdot}$ and $\gamma$ be a Gaussian measure with mean 0 and covariance operator $C$ with $\tr(C) < \infty$.
Let $g : \H \to \R$ and define the target probability measure by its density $d\Pi/d\gamma \propto \exp(-g)$.
When $\H$ is infinite-dimensional, then random-walk Metropolis-Hastings is infeasible where the pCN Metropolis-Hastings algorithm is well-defined.
For $\rho \in [0, 1)$, consider the pCN Metropolis-Hastings algorithm defined by
\[
X_t =
\begin{cases} 
p_t, U_t \le \exp( g(X_{t-1}) - g(p_t) ) \\
X_{t-1}, U_t > \exp( g(X_{t-1}) - g(p_t) )
\end{cases}
\]
where $p_t = \rho X_{t-1} + \sqrt{1 - \rho^2} Z_t$ with independent uniform random variables $(U_t)_{t \in \Z_+}$ and independent $\gamma$ distributed variables $(Z_t)_{t \in \Z_+}$.
We say a function is $\beta$-Hölder continuous for $\beta \in (0, 1]$if it is Lipschitz continuous with respect to $\norm{\cdot}^{\beta}$.
The following shows that convergence in the chi-squared divergence can be obtained from weak convergence even when the convergence is not geometric.
The additional requirement is a Hölder continuous initial measure and a central limit theorem can be shown with Corollary~\ref{corollary:clt} for Hölder continuous functions.

\begin{proposition}
Let $(\P_t)_{t \in \Z_+}$ be the transition kernels for pCN Metropolis-Hastings.
Assume $g$ is $\beta$-Hölder  for $\beta \in (0, 1]$ and $\inf_{x \in \H} g(x) > -\infty$.
Then for any $p > 1$, there exists constants $C, c > 0$ such that
\eq{
\chi^2\left( \nu \P_t \mid \Pi \right)
\le C \exp(- c \log(2t)^{2/\beta} )
\norm{\frac{d\nu}{d\Pi}}_{\text{Lip}(\norm{\cdot}^{\beta})}^2
}
holds for all probability measures $\nu$ on $\H$ with $\beta$-Hölder continuous density $d\nu/d\Pi$.
\end{proposition}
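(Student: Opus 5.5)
Our approach is to apply the reversible, $q=2$ case of Theorem~\ref{theorem:nonreversible_bound} (part 2) on the metric space $(\H, d)$ with $d(x,y) = \norm{x-y}^{\beta}$. Two inputs are needed: a weak convergence rate of the form in Assumption~\ref{assumption:convergence}, and the fact that the chi-squared divergence equals the variance of $\P_t$ applied to the density.

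\textbf{Step 1: reduction to the variance bound.} The pCN Metropolis--Hastings kernel is reversible with respect to $\Pi$. Hence, with $f = d\nu/d\Pi$, we have $\chi^2(\nu\P_t \mid \Pi) = \norm{\P_t f - 1}_{L_2(\Pi)}^2$ and $\int_{\X} f\, d\Pi = 1$. A $\beta$-Hölder density is exactly a Lipschitz function for $d = \norm{\cdot}^\beta$. Part 2 of Theorem~\ref{theorem:nonreversible_bound}, with $q=2$ and $R(\cdot)$ replaced by $R(2\cdot)$, then gives
\[
\chi^2(\nu\P_t\mid\Pi) \le C_{2,\Pi}\Big[\int_{\H} M\, d\Pi\Big]^{1-\frac{1}{2p}} R(2t)^{1-\frac1p}\norm{\tfrac{d\nu}{d\Pi}}_{\text{Lip}(\norm{\cdot}^\beta)}^2 .
\]
For this bound to be finite we need $\norm{d}_{L_{2p}(\Pi\otimes\Pi)}<\infty$, i.e.\ $\int \norm{x}^{2p\beta}\,d\Pi<\infty$. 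This holds because $d\Pi/d\gamma \propto e^{-g}$ is bounded (since $\inf g>-\infty$) and Gaussian measures have all moments by Fernique's theorem.

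\textbf{Step 2: weak convergence with a subgeometric rate.} This is the main obstacle: we must verify Assumption~\ref{assumption:convergence} with $R(t)=\exp(-c'\log(t)^{2/\beta})$ and an integrable $M$. The pCN proposal is a Gaussian autoregression: synchronously coupling two chains with the same $Z_t$ contracts $\norm{X-Y}$ by the factor $\rho$ whenever both chains accept. The acceptance probability is $\exp(-(g(p_t)-g(x))_+)$. Because $g$ is $\beta$-Hölder, $|g(p_t)-g(x)|\lesssim \norm{p_t-x}^\beta$, which grows only polynomially in $\norm{x}$. Acceptance therefore stays bounded below roughly by $\exp(-C\norm{x}^\beta)$ far out, and Gaussian tails make the region $\{\norm{x}^\beta \ge s\}$ have $\Pi$-mass of order $e^{-c s^{2/\beta}}$.

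With this in hand, the plan is a subgeometric drift condition (in the style of \cite{douc_practical_2004}) with Lyapunov function $V(x)=\exp(a\norm{x}^{\beta'})$, combined with a local Wasserstein contraction for the bounded metric $d\wedge 1$ on sublevel sets of $V$. The arguments of \cite{butkovsky_subgeometric_2014, durmus:etal:2016} then give
\[
\norm{\P_t(x,\cdot)-\Pi}_{BL(d)}\le M(x)R(t),
\]
where $M\propto V$ is $\Pi$-integrable for small $a$ by Fernique's theorem. The rate is obtained by balancing the time spent before the chain re-enters a ball of radius $r$ (where contraction is geometric at rate $\rho$ and acceptance is bounded below) against the tail probability $e^{-cr^{2/\beta}}$. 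Optimizing $r\sim \log(t)^{1/\beta}$ yields a rate $\exp(-c'\log(t)^{2/\beta})$. I expect the careful verification of the drift inequality for the Metropolis rejection step to be the technical heart.

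\textbf{Step 3: conclusion.} Substituting $R(2t)^{1-1/p}=\exp(-c'(1-1/p)\log(2t)^{2/\beta})$ into the bound from Step 1 gives the result, with $c=c'(1-1/p)$ and $C=C_{2,\Pi}[\int_{\H} M\,d\Pi]^{1-1/(2p)}$.
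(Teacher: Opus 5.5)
Your proposal is correct and has the same structure as the paper's proof. A pCN-specific weak convergence bound is converted to the BL norm and fed into a transfer result, and reversibility then identifies $\norm{\P_t(d\nu/d\Pi)-1}_{L_2(\Pi)}^2$ with $\chi^2(\nu\P_t\mid\Pi)$. The difference is which transfer result you use.

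The paper applies Theorem~\ref{theorem:nonreversible_bound_gaussian_tail}, which exploits the Gaussian tails of $\Pi$ (Fernique). This keeps essentially the full rate: the extra factor $1+\log(K/R(2t))\approx 1+\kappa\log(2t)^{2/\beta}$ is absorbed by taking any $c<\kappa$. You apply part 2 of Theorem~\ref{theorem:nonreversible_bound} with $q=2$. That needs only $\norm{d}_{L_{2p}(\Pi\otimes\Pi)}<\infty$ and explains the otherwise unused ``for any $p>1$'' in the statement. The price is a weaker exponent, $c=\kappa(1-1/p)$. Since the statement only claims some $c>0$, both routes prove it.

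For Step 2 you do not need to re-derive anything. The paper takes the bound directly from \cite{durmus:etal:2016} (Propositions 12, 13 and Lemma 18):
$\W_{\norm{\cdot}^{\beta}\wedge 1}(\P_t(x,\cdot),\Pi)\le C\exp(-\kappa\log(t)^{2/\beta})\exp(s\norm{x}^2)$,
with $\kappa=\theta/(36L^{2/\beta})$, $s=(1-\rho)^2\theta/16$, and $\theta$ the Fernique exponent. It then uses $\norm{\P_t(x,\cdot)-\Pi}_{BL(d)}\le 2\W_{d\wedge 1}(\P_t(x,\cdot),\Pi)$. Integrability of $M(x)=C\exp(s\norm{x}^2)$ follows from $s\le\theta$ and $\inf g>-\infty$.

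If you do carry out your sketch, fix two details.
\begin{enumerate}
\item The Lyapunov function must have a quadratic exponent, $V(x)=\exp(s\norm{x}^2)$. The $(\log t)^{2/\beta}$ rate comes from balancing this against the acceptance probability of order $\exp(-C\norm{x}^{\beta})$. An exponent $\beta'<2$ in $\exp(a\norm{x}^{\beta'})$ would only give a rate of order $\exp(-c(\log t)^{\beta'/\beta})$.
\item In your balancing heuristic, the $\Pi$-mass outside a Euclidean ball of radius $r$ is $e^{-cr^2}$, not $e^{-cr^{2/\beta}}$. It is $e^{-cr^2}$ with $r\sim(\log t)^{1/\beta}$ that produces the claimed rate. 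Your combination as written would give $(\log t)^{2/\beta^2}$.
\end{enumerate}
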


\begin{proof}
For some small $\theta > 0$, $\int \exp(\theta \norm{x}^2) \Pi(dx) < \infty$ by Fernique's theorem.
The weak convergence bound follows from \citep[Proposition 12, Proposition 13, and Lemma 18]{durmus:etal:2016}.
Let $L = \norm{g}_{\text{Lip}(\norm{\cdot}^\beta)}$ and $\kappa = \theta / (36 L^{2/\beta} )$ and $s = (1 - \rho)^2 \theta / 16$.
Then for some $C > 0$, for all $x \in \H$
\[
\W_{\norm{\cdot}^{\beta} \wedge 1}\left( \P_t(x, \cdot), \Pi \right)
\le C \exp(-\kappa {\log(t)}^{2/\beta} ) \exp(s \norm{x}^2).
\]
Since $s \le \theta$ and $\inf_{x \in \H} g(x) > -\infty$, then $\int_{\H} \exp(s \norm{x}^2) d\Pi < \infty$.
Clearly, 
\[
\norm{ \P_t(x, \cdot) - \Pi }_{BL( \norm{\cdot}^{\beta} )}
\le 2 \W_{\norm{\cdot}^{\beta} \wedge 1}\left( \P_t(x, \cdot), \Pi \right).
\]
Then Theorem~\ref{theorem:nonreversible_bound_gaussian_tail} can be applied.
Since Metropolis-Hastings is reversible for $\Pi$, then applying this to $\P_t (d\nu / d\Pi)$ gives the chi-squared divergence bound.
\end{proof}

\subsection{Applications to the stability of stochastic gradient descent}

It is generally easier to show informative convergence results in Wasserstein distances for Stochastic Gradient Descent (SGD) than in total variation.
In particular, the theoretical results developed here can be used to control the variance for SGD often used in AI based upon convergence in Wasserstein distances.
We are interested in applying SGD to functions that fail to be strongly convex and decay less quickly than quadratic towards infinity. 
Let $F: \R^d \to \R$ be a twice continuously differentiable and we will assume the gradient $\nabla F$ is bounded and Lipschitz continuous, and $F$ is locally strongly convex on $\norm{x}_2 \le r$ for any $r > 0$.
By locally strongly convex, there is a constant $m_r > 0$ such that for all $\norm{x}_2 \le r$,  the Hessian $\nabla^2 F(x)$ is positive definite, that is, for all $v \in \R^d$
\[
v^T \nabla^2 F(x) v \ge m_r \norm{v}_2^2.
\]
For example, the function $x \mapsto \sqrt{\norm{x}_2^2 + 1}$ is only locally strongly convex.
We will also assume $F$ satisfies a tail condition that for $c > 0$ and $\alpha \in (0, 1)$,
\begin{align}
\nabla F(x)^T x \ge c \norm{x}_2^{2(1- \alpha)}
\label{assumption:sgd_f}
\end{align}
holds for all $x \in \R^d$.

With step size $h \in (0, 1)$, we can define an SGD algorithm through the updates
\[
X_t = X_{t - 1} - h \nabla F(X_{t-1}) + h \xi_t.
\]
where $(\xi_t)_{t \ge 1}$ are independent random variables on $\R^d$ with $\E(\xi_t) = 0$.
We will assume for every $q \ge 1$,
\begin{align}
\sup_{t \in \Z_+} \E\left( \norm{ \xi_t }_2^{2q} \right) < \infty.
\label{assumption:sgd_noise}
\end{align}

\begin{proposition}
\label{proposition:sgd_convergence}
Let $(\P_t)_{t \in \Z_+}$ be the transition kernels for stochastic gradient descent defined in this section.
If the step size $h$ is chosen small enough, then for every $p \ge 2$ and a constant $K_p > 0$ depending on $p$
\begin{align}
\norm{ \P_t f - \int_{\X}  f d\Pi}_{L_2(\Pi)}^2
&\le \frac{K_p}{( t + 1)^{\frac{p-1}{2\alpha} } } \norm{ f }_{\text{Lip}(\norm{\cdot}_2)}^2
\end{align}
holds for all Lipschitz functions $f : \X \to \R$.
\end{proposition}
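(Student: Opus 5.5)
The plan is to verify Assumption~\ref{assumption:convergence} with a polynomial rate and an integrable function $M$, and then apply the second part of Theorem~\ref{theorem:nonreversible_bound} with $q = 2$. Because the SGD chain is not reversible, the theorem is used with $R(t)$ itself rather than $R(2t)$.

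\textbf{Drift condition.} We use Lyapunov functions $V_k(x) = 1 + \norm{x}_2^{2k}$. Expanding $\norm{x - h\nabla F(x) + h\xi}_2^2$ and using that $\nabla F$ is bounded, the tail condition \eqref{assumption:sgd_f}, the centering $\E(\xi_t) = 0$, and the moment bound \eqref{assumption:sgd_noise}, we obtain for $h$ small
\[
\P_1 V_1(x) \le V_1(x) - c h \norm{x}_2^{2(1-\alpha)} + b\, I_{\{\norm{x}_2 \le r_0\}}.
\]
This is a subgeometric drift $\P_1 V \le V - \phi(V) + b I_C$ with $\phi(v) \asymp v^{1-\alpha}$. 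The same argument for $V_k$ gives $\phi_k(v) \asymp v^{1 - \alpha/k}$. These drifts give polynomial moments of $\Pi$ of every order: $\int \norm{x}_2^{2k - 2\alpha}\, d\Pi < \infty$ for all $k$.

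\textbf{Contraction and weak convergence bound.} On a ball $\norm{x}_2 \le r$ the function $F$ is $m_r$-strongly convex and $\nabla F$ is Lipschitz. Hence, for $h$ small, the synchronous coupling (same $\xi_t$ for two copies) contracts:
\[
\norm{x - y - h(\nabla F(x) - \nabla F(y))}_2 \le (1 - h m_r/2)\norm{x-y}_2 ,
\]
for segments inside the ball. Outside the ball the map is nonexpansive, since convexity together with Lipschitz $\nabla F$ and $h$ small gives this. So the synchronous coupling is a local contraction and a global weak contraction for $d \wedge 1$. Combining this with the subgeometric drift via the framework of \cite{butkovsky_subgeometric_2014, durmus:etal:2016}, we get for each $k$
\[
\W_{\norm{\cdot}_2 \wedge 1}(\P_t(x,\cdot), \Pi) \le C_k V_k(x)\, R_k(t),
\qquad R_k(t) = (1+t)^{-\kappa_k}.
\]
In the standard theory $\kappa_k = (1-\alpha/k)/(\alpha/k) \cdot$ const, roughly $k/\alpha - 1$, and it grows with $k$. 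By the bound $\norm{\cdot}_{BL} \le 2\W_{d\wedge1}$ this yields Assumption~\ref{assumption:convergence} with $M = C_k V_k$ and $R = R_k$. Moreover $\int M\, d\Pi < \infty$ by the moment bound above, applied with a larger $k$.

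\textbf{Applying Theorem~\ref{theorem:nonreversible_bound}.} Use part 2 with $q = 2$ and exponent $p$. Its constant $C_{2,\Pi}$ involves $\norm{d}_{L_{2p}(\Pi\otimes\Pi)}$, which is finite by the polynomial moments. This gives the rate $R_k(t)^{1 - 1/p}$. We choose $k$ as a function of $p$ so that $\kappa_k (1 - 1/p) \ge (p-1)/(2\alpha)$. Taking $k \approx p$ suffices, since $\kappa_k (1-1/p) \approx (p/\alpha)(p-1)/p$. All constants are absorbed into $K_p$.

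\textbf{Main obstacle.} The hardest step is the second one: producing a subgeometric Wasserstein bound with an explicit exponent in $k$, and in particular gluing the merely local contraction to the drift. The natural first attempt is the Butkovsky-type result: a $d\wedge1$-contracting coupling on level sets of $V_k$, together with the subgeometric drift, gives rate $1/\phi_k \circ H_{\phi_k}^{-1}(t)$. One then checks that this rate is polynomial of order about $k/\alpha$. If the exponent bookkeeping comes out slightly off, the extra freedom in $k$ lets us still reach the stated exponent.
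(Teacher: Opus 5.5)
Your proposal is correct and follows essentially the paper's route. You combine a polynomial drift for $(1+\norm{x}_2^2)^{p/2}$-type Lyapunov functions with a synchronous-coupling local contraction, feed both into the Butkovsky/Durmus--Fort--Moulines framework to get Assumption~\ref{assumption:convergence} with $\int M\,d\Pi<\infty$, and then apply part 2 of Theorem~\ref{theorem:nonreversible_bound} with $q=2$. Choosing the drift exponent $k \approx p$ separately from the H\"older exponent $p$ only adds slack to the exponent bookkeeping.

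One detail needs adjusting. With the unscaled metric $\norm{\cdot}_2\wedge 1$, two far-apart points of a large level set stay at distance $1$ after a synchronous step, so that set is not $d$-small. The paper avoids this by using $d_r(x,y)=\norm{x-y}_2/(2r)\wedge 1$, where $r$ is the radius of the ball carrying the drift's small set; this only changes constants absorbed into $K_p$.
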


The rate is sufficiently fast and the methods developed here can be used to show a central limit theorem rather generally with Corollary~\ref{corollary:clt}.
However, it seems possible to obtain a faster subexponential rate of convergence under additional assumptions.
To prove Proposition~\ref{proposition:sgd_convergence}, we show the local contraction in a Wasserstein distance relying on the local strong convexity of the function $F$.
For $x, y \in \R^d$, define $x' = x - h \nabla F(x) + h \xi$ and $y' = y - h \nabla F(y) + h \xi$ with $\xi$ satisfying \eqref{assumption:sgd_noise}.

\begin{proposition}
\label{proposition:sgd_contraction}
Let $(\P_t)_{t \in \Z_+}$ be the transition kernels for stochastic gradient descent defined in this section.
Let $r > 0$, and let $m_r$ be the local strong convexity constant of $F$ when $\norm{x}_2 \le r$. 
Define $L = \norm{\nabla F}_{\text{Lip}(\norm{\cdot}_2)}$ and if $h \le 2/(L + m_r)$, then if either $\norm{x}_2 \le r$ and $\norm{y}_2 \le r$ or $d_r(x, y) < 1$,
\[
\W_{d_r}( \P_1(x, \cdot), \P_1(y, \cdot) ) \le \left( 1 - \frac{2 h}{\frac{1}{L} + \frac{1}{m_r}} \right)^{1/2} d_r(x, y).
\]
where $d_r(x, y) = \norm{x - y}/(2r) \wedge 1$.
\end{proposition}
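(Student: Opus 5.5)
The plan is to use the synchronous coupling and reduce everything to a deterministic one-step contraction. Given $x, y$, drive both chains with the same noise $\xi$, so that $x' = x - h\nabla F(x) + h\xi$ and $y' = y - h\nabla F(y) + h\xi$. The noise cancels in the difference:
\[
x' - y' = x - y - h\left(\nabla F(x) - \nabla F(y)\right).
\]
Since $(x', y')$ is a coupling of $\P_1(x,\cdot)$ and $\P_1(y,\cdot)$, we get $\W_{d_r}(\P_1(x,\cdot),\P_1(y,\cdot)) \le \E\, d_r(x',y')$. The right-hand side is in fact deterministic, so it suffices to bound $\norm{x'-y'}_2$ pointwise.

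For the pointwise bound I would write $\nabla F(x) - \nabla F(y) = A(x-y)$, where $A = \int_0^1 \nabla^2 F(y + s(x-y))\,ds$ is symmetric. The upper bound $A \preceq L I$ holds because $\nabla F$ is $L$-Lipschitz. If the whole segment $[x,y]$ lies in the ball $\{\norm{z}_2 \le r\}$, local strong convexity also gives $A \succeq m_r I$. Then $\norm{x'-y'}_2 \le \max_{\lambda \in [m_r, L]} |1 - h\lambda|\,\norm{x-y}_2$. Because $\lambda \mapsto (1-h\lambda)^2$ is convex, it suffices to check the two endpoints. At $\lambda = m_r$, the inequality $(1-hm_r)^2 \le 1 - 2hm_rL/(m_r+L)$ simplifies to $h \le 2/(m_r+L)$, and the endpoint $\lambda = L$ gives exactly the same condition. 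Since $2m_rL/(m_r+L) = 2/(1/L + 1/m_r)$, this yields
\[
\norm{x'-y'}_2 \le \rho\,\norm{x-y}_2, \qquad \rho = \left(1 - \frac{2h}{\frac{1}{L}+\frac{1}{m_r}}\right)^{1/2}.
\]

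Next I would pass to the truncated metric. If $\norm{x-y}_2 \le 2r$, then $d_r(x,y) = \norm{x-y}_2/(2r)$. Hence
\[
d_r(x',y') \le \frac{\norm{x'-y'}_2}{2r} \wedge 1 \le \rho\,\frac{\norm{x-y}_2}{2r} = \rho\, d_r(x,y).
\]
When $\norm{x}_2, \norm{y}_2 \le r$, convexity of the ball puts the segment inside it, and $\norm{x-y}_2 \le 2r$ holds automatically, so this case is complete.

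The main obstacle is the alternative hypothesis $d_r(x,y) < 1$ with $x$ or $y$ outside the ball. There $\norm{x-y}_2 < 2r$ still makes the truncation harmless, but the segment need not lie in $\{\norm{z}_2 \le r\}$, so the lower bound $A \succeq m_r I$ is not directly available. My first attempt would be to use that $F$ is locally strongly convex on every ball, so the Hessian is positive definite along any bounded segment, and to enlarge the radius to one containing the segment. This requires checking that the resulting constant can still be taken as $m_r$, for instance when $m_r$ is monotone in $r$ or when the statement's $m_r$ is read as a lower Hessian bound on the relevant region. If that fails, I would restrict this case to pairs whose connecting segment stays in the ball. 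Apart from this point, the argument is just the standard co-coercivity computation carried out on the local Hessian bounds.
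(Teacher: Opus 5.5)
Your argument for the case $\norm{x}_2,\norm{y}_2\le r$ is correct and follows the paper's route: synchronous coupling, then a deterministic bound $\norm{x'-y'}_2\le\rho\norm{x-y}_2$, then truncation using $\norm{x-y}_2\le 2r$. The one difference is how the one-step bound is obtained. The paper uses the co-coercivity inequality of Nesterov's Theorem 2.1.12; you use the spectrum of the averaged Hessian $A$. Your version is the better justification. Nesterov's theorem is stated for functions that are $m_r$-strongly convex on all of $\R^d$, whereas $F$ is only $m_r$-strongly convex on the ball. The local co-coercivity inequality is exactly your condition $(A-m_rI)(A-LI)\preceq 0$, rewritten as $\ip{Av,v}\ge \frac{m_rL}{m_r+L}\norm{v}_2^2+\frac{1}{m_r+L}\norm{Av}_2^2$.

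For the alternative hypothesis $d_r(x,y)<1$, the obstacle you flag is real, and the paper's proof has the same gap. It applies the strong-convexity inequality with constant $m_r$ to every pair with $d_r(x,y)\le 1$, without checking that the pair lies in the ball. Your first rescue cannot work: the best lower Hessian bound is non-increasing in the radius, so enlarging the ball only weakens the factor.

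In fact no argument can work, because this part of the statement is false. Take
\begin{itemize}
\item $d=1$ and $F(x)=\sqrt{1+x^2}$ (the paper's own example, with $L=1$ and $m_1=2^{-3/2}$);
\item $\xi\equiv 0$, $r=1$, $h=1/2$;
\item $x=100$ and $y=101$.
\end{itemize}
Then $d_r(x,y)=1/2$, both kernels are point masses, and
\[
\W_{d_r}\bigl(\P_1(x,\cdot),\P_1(y,\cdot)\bigr)=\tfrac12\Bigl(1-h\bigl(F'(101)-F'(100)\bigr)\Bigr)\approx \tfrac12-2.5\cdot 10^{-7}.
\]
The claimed bound, however, is $\rho\, d_r(x,y)=\tfrac12\bigl(1-(1+2^{3/2})^{-1}\bigr)^{1/2}\approx 0.43$. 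Gaussian noise of small enough variance fails in the same way. To see this, test against the $d_r$-Lipschitz function $u\mapsto\min(\lvert u-a\rvert/2,1)$ with $a=x-hF'(x)$.

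So the second alternative has to be dropped or weakened. Your fallback is the right repair: require the segment $[x,y]$ to lie in the ball, or in any convex set where $\nabla^2F\succeq m_rI$. With that restriction your proof is complete.
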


\begin{proof}
Then by strong convexity \citep[Theorem 2.1.12]{nesterov_lectures_2018}
\eq{
\norm{x' - y'}_2^2
&= \norm{x - y}_2^2 
+ h^2 L^2 \norm{\nabla F(x) - \nabla F(y)}_2^2 
- 2 h ( \nabla F(x) - \nabla F(y) ) \cdot (x - y)
\\
&\le \left( 1 - \frac{2 h m_r L}{L + m_r} \right) \norm{x - y}_2^2 
+ h \left( h  - \frac{2}{L + m_r} \right) \norm{\nabla F(x) - \nabla F(y)}_2^2 
\\
&\le \left( 1 - \frac{2 h m_r L}{L + m_r} \right) \norm{x - y}_2^2.
}
If $d_r(x, y) \ge 1$, then $\E\left( d_r(x', y') \right) \le d_r(x, y)$ and if $d_r(x, y) \le 1$,
\[
\W_{d_r}( \P_1(x, \cdot), \P_1(y, \cdot) )
\le \E\left( d_r(x', y') \right) 
\le \left( 1 - \frac{2 h m_r L}{L + m_r} \right)^{1/2} d_r(x, y).
\]
If $\norm{x}_2, \norm{y}_2 \le r$, then
\[
\E\left( d_r(x', y') \right) \le \left( 1 - \frac{2 h m_r L}{L + m_r} \right)^{1/2}.
\]
\end{proof}

For $p \ge 2$, define a drift function by $V : \R^d \to [0, \infty)$ by $V(x) = ( 1 + \norm{x}_2^2 )^{p/2}$.
We show a polynomial drift condition relying on the tail condition \eqref{assumption:sgd_f} and the moments on the noise \eqref{assumption:sgd_noise}.

\begin{proposition}
\label{proposition:sgd_drift}
Let $(\P_t)_{t \in \Z_+}$ be the transition kernels for stochastic gradient descent.
Then there are constants $k, K > 0$ such that for all $x \in \R^d$
\[
\P_1 V(x) - V(x)
\le - k  V(x)^{ 1 - \frac{2\alpha}{p} } + K. 
\]
\end{proposition}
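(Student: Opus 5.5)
We need to prove the drift inequality $\P_1 V(x) - V(x) \le -k V(x)^{1-2\alpha/p} + K$ with $V(x) = (1+\norm{x}_2^2)^{p/2}$ and $p \ge 2$. The plan is to first compute the one-step change of $W(x) = 1+\norm{x}_2^2$ and then lift it to the power $p/2$ by a Taylor expansion.

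\textbf{Step 1: one-step change of $W$.} Write $x' = x - h\nabla F(x) + h\xi$. Since $\E\xi = 0$,
\[
\E\norm{x'}_2^2 = \norm{x}_2^2 - 2h\nabla F(x)^T x + h^2\norm{\nabla F(x)}_2^2 + h^2\E\norm{\xi}_2^2 .
\]
The tail condition \eqref{assumption:sgd_f}, the boundedness of $\nabla F$ (say $\norm{\nabla F}_\infty \le G$) and the noise moment bound \eqref{assumption:sgd_noise} then give
\[
\E\norm{x'}_2^2 - \norm{x}_2^2 \le -2hc\norm{x}_2^{2(1-\alpha)} + h^2(G^2 + \sigma^2),
\]
where $\sigma^2 = \sup_t \E\norm{\xi_t}_2^2$. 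This is a drift of order $W^{1-\alpha}$ for $W$.

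\textbf{Step 2: lift to $V = W^{p/2}$.} Set $\Delta = \norm{x'}_2^2 - \norm{x}_2^2$, so that
\[
\Delta = -2h\nabla F(x)^T x + 2h\xi^T x + h^2\norm{\nabla F(x) - \xi}_2^2 .
\]
Expand $(W+\Delta)^{p/2}$ by Taylor's theorem with a second-order remainder:
\[
(W+\Delta)^{p/2} \le W^{p/2} + \tfrac{p}{2}W^{p/2-1}\Delta + C_p\,\Delta^2\left(W^{p/2-2} + |\Delta|^{p/2-2}\right).
\]
For $p<4$ the remainder is bounded instead by $C_p\,\Delta^2 W^{p/2-2}$, using $W \ge 1$ and a case split on whether $|\Delta| \le W/2$.

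\textbf{Step 3: bound each term.}
\begin{itemize}
\item The first-order term, after taking expectation, is at most
\[
\tfrac{p}{2}W^{p/2-1}\left(-2hc\norm{x}_2^{2(1-\alpha)} + C h^2\right).
\]
Since $\norm{x}_2^{2(1-\alpha)} \ge W^{1-\alpha} - 1$, this is at most $-phc\,W^{p/2-\alpha} + C' W^{p/2-1}$. Note that $W^{p/2-\alpha} = V^{1-2\alpha/p}$, which is exactly the required exponent.
\item For the remainder, use $|\Delta| \le C h\left(\norm{x}_2 + 1\right)(1+\norm{\xi}_2) + C h^2(1+\norm{\xi}_2^2)$. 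Hence $\E\Delta^2 \lesssim W$, and more generally $\E|\Delta|^{m} \lesssim W^{m/2}$ for every $m$, with all needed moments of $\xi$ finite by \eqref{assumption:sgd_noise}. The remainder is therefore $O(W^{p/2-1})$.
\end{itemize}

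\textbf{Main obstacle: absorbing the error terms.} Every error term is $O(W^{p/2-1})$, while the negative drift is $-phc\,W^{p/2-\alpha}$. Because $\alpha<1$, we have $p/2-1 < p/2-\alpha$, so Young's inequality gives
\[
C W^{p/2-1} \le \tfrac{phc}{2}W^{p/2-\alpha} + K .
\]
This yields the claim with $k = phc/2$.

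The delicate point is making the remainder estimate uniform in $x$. The cross term $2h\xi^T x$ is only of order $\norm{x}_2$, not of order $W$, so $\Delta/W$ is not uniformly small. We therefore cannot Taylor-expand naively on a region where $W + \Delta$ could become small or negative. Since $W+\Delta = 1+\norm{x'}_2^2 \ge 1$, convexity arguments remain valid. We would handle the expansion via the mean value form, with the intermediate point between $W$ and $W+\Delta$, bounding that point by $W + |\Delta|$. Expectations are then controlled by the polynomial moments of $\xi$. No smallness of $h$ is needed here beyond $h<1$, as Step 3 already shows.
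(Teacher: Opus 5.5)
Your proof is correct and follows the same route as the paper. Both use a second-order Taylor expansion: the first-order term produces $-c\,V(x)^{1-2\alpha/p}$ through the tail condition \eqref{assumption:sgd_f}, the noise moments make the remainder $O\big((1+\norm{x}_2^2)^{p/2-1}\big)$, and this lower-order term is absorbed because $\alpha<1$ by treating small and large $V$ separately.

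The only difference is the expansion variable. You expand the scalar map $u\mapsto u^{p/2}$ in $\Delta=\norm{x'}_2^2-\norm{x}_2^2$, whereas the paper expands $V$ along the segment from $x$ to $x'$ in $\R^d$. Since $x'-x=h(\xi-\nabla F(x))$ has moments independent of $x$, the paper's version avoids your $p<4$ case split and the estimate $\E|\Delta|^m\lesssim W^{m/2}$, both of which are forced by the $O(\norm{x}_2)$ cross term in $\Delta$.
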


\begin{proof}
By Taylor expansion and using \eqref{assumption:sgd_f} and \eqref{assumption:sgd_noise}, there is a constant $C > 0$ such that
\eq{
&\E\left[ V(x') - V(x) \right]
\\
&\le - h \nabla V(x) \cdot \nabla F(x)
+ \E\left( (x' - x)^T \int_0^1 \nabla^2 V( x + s (x' - x) ) (1-s)ds (x' - x) \right)
\\
&\le - h p \left( 1 + \norm{x}_2^2 \right)^{p/2-1} x \cdot \nabla F(x) 
+ C h^2 \left( 1 + \norm{x}_2^2 \right)^{p/2-1}
\\
%&\le - h p \left( 1 + \norm{x}^2 \right)^{p/2-\alpha} 
%+ C h^2 \left( 1 + \norm{x}^2 \right)^{p/2-1}
%\\
&\le - h p V(x)^{1-\frac{2\alpha}{p}} 
+ C h^2 V(x)^{1-\frac{2\alpha}{p}} \left( 1 + \norm{x}_2^2 \right)^{\alpha - 1}.
}
The result follows considering the separate cases when $V(\cdot)$ is small and large.
\end{proof}

\begin{proof}[Proof of Proposition~\ref{proposition:sgd_convergence}] 
Let $p \ge 2$.
The drift shown in Proposition~\ref{proposition:sgd_drift} and the local contraction shown in Proposition~\ref{proposition:sgd_contraction} imply by either techniques in \cite{butkovsky_subgeometric_2014,durmus:etal:2016} that the invariant measure exists and weak a convergence bound holds.
By Proposition~\ref{proposition:sgd_drift} and the assumption on the gradient noise \eqref{assumption:sgd_noise}, then 
\[
\int_{\R^d} ( 1 + \norm{x}_2^2 )^{p/2} d\Pi(x) < \infty.
\]
since this holds for $p = q + 2 \alpha$ with arbitrary $q \ge 2$.
For some $r > 0$ with $d_r(x, y) = \norm{x - y}/(2 r) \wedge 1$ and some $C_p > 0$, for for all $t$ and $x \in \X$, 
\[
\W_{d_r}(\P_t(x, \cdot), \Pi)
\le \frac{C_p}{ (1 + t)^{ \frac{p}{2\alpha} } } \left( ( 1 + \norm{x}_2^2 )^{p/2} + \int_{\R^d} ( 1 + \norm{x}_2^2 )^{p/2} d\Pi(x) \right).
\]
Therefore, we have shown that weak convergence holds in the sense of Assumption~\ref{assumption:convergence} and the proof follows by the second result in Theorem~\ref{theorem:nonreversible_bound}.
\end{proof}

\subsection{Applications to stability of solutions of stochastic delay equations}

We look at convergence of stochastic delay equations that have been previously studied in Wasserstein distances \cite{butkovsky_subgeometric_2014, hairer_asymptotic_2011}.
Let $\C(I, \R^d)$ denote the set of continuous real-valued functions defined on an interval $I \subset \R$.
Let $v : \R \to (0, \infty)$ be a bounded, strictly positive, and strictly monotone increasing function.
Consider the stochastic differential equation
\begin{align}
&dX(t) = -X(t) dt + v(X(t-1)) dW(t) \label{eq:sde}
\\
&X(s) = \eta(s), s \in [-1, 0] 
\end{align}
where the initial condition $\eta \in \C([-1, 0], \R)$ and $W$ is a standard Wiener process on $\R$.

Generally, the solution of such an equation is not a Markov process, but can be lifted to define a Markov process solution on a larger space.
Let $f : \C([-1, 0], \R^d) \to \R^d$ and $g : \C([-1, 0], \R^d) \to \R^d \times \R^d$ and define the stochastic delay equation with $X_t(s) = X_{t + s}$ by
\eq{
&dX(t) = f(X_t) dt + g(X_t) dW(t)
\\ 
&X_0 \in \C([-1, 0], \R^d) 
}
where $W$ is a standard Wiener process on $\R^d$.
The solution $(X_t)_{t \ge 0}$ , if it exists, is a Markov process on $\C$ with transition kernel $\P_t(\eta, \cdot)$ for $\eta \in \C([-1, 0], \R^d)$.

Let $\norm{A}_F = \tr(A^T A)^{1/2}$ denote the Frobenius norm of a matrix $A$.
We will assume $f, g$ are continuous and bounded on bounded subsets on $\C([-1, 0], \R^d)$ and $g^{R}(\eta)$ is the right inverse of the matrix $g(\eta)$ for every $\eta \in \Z$ so that $g(\eta) g^{R}(\eta) = I$ where $I$ is the identity matrix on $\R^d \times \R^d$.
Assume there are constants $K, b, B > 0$ such that for any $x, y \in \C([-1, 0], \R^d)$,
\begin{align}
&2 \ip{f(x) - f(y), x(0) - y(0)} \wedge 0 
+ \norm{ g(x) - g(y) }_{F}^2
\le K \norm{ x - y }^2
\label{assumption:lipschitz}
\\
&\sup_{h \in \C([-1, 0], \R^d)} \norm{g^{R}(h)}_{F} < \infty
\label{assumption:g}
\\
&\ip{f(x), x(0)} \le - b \norm{x(0)}^2 + B.
\label{assumption:drift}
\end{align}
These assumptions ensure there exists a solution to \eqref{eq:sde} and an unique invariant measure.
Solutions to stochastic delay equations are of interest as they often have the property that $\P_t(\eta, \cdot)$ cannot converge in total variation to its invariant measure, and tools for convergence to equilibrium with drift and minorization are unavailable \cite{MT2009}.
However, we are able to convert weak convergence from previous results into providing stronger theoretical results.

\begin{proposition}
Let $(\P_t)_{t \ge 0}$ be the transition kernels for the solution of the stochastic delay equation defined by \eqref{eq:sde}.
Assume the conditions on $f, g$ \eqref{assumption:lipschitz}, \eqref{assumption:g}, and \eqref{assumption:drift} are satisfied.
Then for any $p > 1$, there is an $r > 0$ and a constant $C_p > 0$ such that if if $t \ge -2p \log( SFR_{2p}(f) ) / r$,
\eq{
\norm{ \P_t f - \int_{\C([-1, 0], \R^d)} f d\Pi }_{L_{2}(\Pi)}^2
\le C_p \exp( -(1-1/p) r t ) \norm{f - \int_{\C([-1, 0], \R^d)} f d\Pi}^2_{L_{2p}(\Pi)}
}
holds for all Lipschitz functions $f : \X \to \R$.
\end{proposition}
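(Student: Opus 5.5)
The plan is to verify Assumption~\ref{assumption:convergence} with a geometric rate $R(t) = \exp(-r t)$ and a function $M$ with $\int M \, d\Pi < \infty$, and then apply the first part of Theorem~\ref{theorem:nonreversible_bound} with $q = 2$. The geometric weak convergence comes from the existing Wasserstein theory for stochastic delay equations \cite{hairer_asymptotic_2011, butkovsky_subgeometric_2014}. Under the one-sided Lipschitz condition \eqref{assumption:lipschitz} and the bounded right inverse \eqref{assumption:g}, one builds a generalized coupling: a Girsanov shift uses $g^R$ to force the solutions started at $\eta$ and $\tilde\eta$ to merge. This gives a distance-like function $\tilde d(\eta, \tilde\eta) = (N\norm{\eta - \tilde\eta}_\infty \wedge 1)^{1/2}$ that is contracting on bounded sets. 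Taking $V(\eta) = 1 + \norm{\eta}_\infty^2$, the drift condition \eqref{assumption:drift} with It\^o's formula gives $\P_t V \le e^{-\gamma t} V + K'$. The weak Harris theorem of \cite{hairer_asymptotic_2011} then yields $r' > 0$ and $C > 0$ with
\[
\W_{\tilde d}(\P_t(\eta, \cdot), \Pi) \le C e^{-r' t}\left( V(\eta) + \int V \, d\Pi \right).
\]

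Next I convert this to the bounded Lipschitz norm for the sup-norm metric $d$. Any $h$ with $\norm{h}_{\text{Lip}(d)} + \norm{h}_\infty \le 1$ satisfies $|h(\eta) - h(\tilde\eta)| \le d(\eta, \tilde\eta) \wedge 2 \le c_N \tilde d(\eta, \tilde\eta)^2$, so $\norm{\P_t(\eta, \cdot) - \Pi}_{BL(d)} \lesssim \W_{\tilde d^2}$. Because $\tilde d^2 \le \tilde d$ (as $\tilde d \le 1$), this is at most a constant times $\W_{\tilde d}$. The assumption therefore holds with $M = C''(V + \int V \, d\Pi)$, normalized so that $M \ge 1$, and $R(t) = e^{-r t}$ with $r = r'$ (or $r' \wedge 1$, to keep $R \le 1$).

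I must also check that $\int V \, d\Pi < \infty$. Integrating the drift inequality against the invariant measure $\Pi$ gives $\int V \, d\Pi \le K'/(1 - e^{-\gamma t})$, after a standard truncation argument to justify the integration. Hence $\int M \, d\Pi < \infty$.

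Finally, apply Theorem~\ref{theorem:nonreversible_bound}(1) with $q = 2$, so that $q/(q-1) = 2$. With $R(t) = e^{-rt}$ we have $R^{-1}(u) = -\log(u)/r$, and the time threshold becomes $t \ge -\log(SFR_{2p}(f)^{2p})/r = -2p \log(SFR_{2p}(f))/r$, exactly as stated. The conclusion is the bound $c_1 (\int M \, d\Pi)^{1 - 1/(2p)} e^{-(1 - 1/p) r t} \norm{f - \int f \, d\Pi}^2_{L_{2p}(\Pi)}$, and we set $C_p = 24 (\int M \, d\Pi)^{1 - 1/(2p)}$.

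The main obstacle is the first step: importing the weak Harris / generalized coupling result with the right drift function and an explicit exponential rate. The key point there is that contraction holds for the distance-like function $\tilde d$ rather than for a genuine bounded metric, which is why the comparison between the bounded Lipschitz norm and $\W_{\tilde d}$ needs care.
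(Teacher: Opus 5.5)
Your proposal is correct and follows the paper's route. You import the exponential Wasserstein bound of \cite{hairer_asymptotic_2011} to verify Assumption~\ref{assumption:convergence} with $R(t)=e^{-rt}$ and a $\Pi$-integrable $M$ built from a Lyapunov function, then apply Theorem~\ref{theorem:nonreversible_bound}(1) with $q=2$; this gives exactly the threshold $-2p\log(SFR_{2p}(f))/r$ and $C_p=24(\int M\,d\Pi)^{1-1/(2p)}$.

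The differences are confined to the imported step: the paper uses $V(x)=\norm{x(0)}^2$, puts $1+\sqrt{V}$ into $M$, and cites \cite[Remark 5.2]{hairer_asymptotic_2011} for integrability. One detail of yours there needs repair. For the sup-norm $V$, the bound $\P_t V\le e^{-\gamma t}V+K'$ fails for $t<1$, because $X_t$ still contains $\eta$ on $[t-1,0]$. Also, the It\^o/maximal-inequality computation needs control of $\norm{g}_F$ (e.g.\ boundedness, as for $v$ in \eqref{eq:sde}), which is not among the listed hypotheses. So assert the drift only at one fixed large time $t_0$; that is all your truncation argument and the weak Harris theorem use.
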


\begin{proof}
Under these assumptions, we have that there exists a unique global solution over time and there exists a unique invariant measure $\Pi$ \citep[Theorem 3.1]{hairer_asymptotic_2011}.
Let $V(x) = \norm{x(0)}^2$ and \citep[Remark 5.2]{hairer_asymptotic_2011} this is integrable with the invariant measure.
The weak convergence follows by \citep[Proposition 5.3 and Proposition 5.4]{hairer_asymptotic_2011} so that for some $r, \delta > 0$ and for all $\eta \in \C([-1, 0], \R^d)$
\[
\W_{d_\delta}(\P_{t}(\eta, \cdot), \Pi)
\le \exp(-r t) \left( 1 + \sqrt{V(\eta)} + \int_{\C([-1, 0], \R^d)} \sqrt{V} d\Pi \right)
\]
where $d_{\delta}(x, y) = \left( \norm{x - y} / \delta \right) \wedge 1$.
Then we may apply the first part of Theorem~\ref{theorem:nonreversible_bound}.
\end{proof}

\section{Final discussion}
\label{section:conclusion}

The main contributions of this paper develop constructive Lebesgue norm convergence bounds for non-reversible Markov transition kernels based only on weak convergence.
To the best of our knowledge, this is the first set of results extending weak subgeometric convergence rates of the transition kernels $(\P_t)_{t \in T}$ to constructive bounds on the $L_q(\Pi)$ norm of $\P_t f$ for unbounded Lipschitz functions $f$.
The main techniques here appear to generalize to semigroups not necessarily constructed from Markov transition kernels.
Using similar techniques, the relative entropy can be controlled as well and should be of interest to investigate weak convergence and implications related to weak Sobolev inequalities \cite{cattiaux_weak_2007}.

These results have important practical applications to verify the reliability of Markov chain Monte Carlo simulations via central limit theorems when tools to establish geometric ergodicity in total variation are unavailable \cite{butkovsky_subgeometric_2014, durmus:etal:2016}.
We obtain $L_2(\Pi)$ convergence of $\P_t f$, the function $f$ we are estimating is required to be in $L_{2p}(\Pi)$ with $p > 1$ in comparison to the reversible case which requires only $L_{2}(\Pi)$.
In particular, these results are not an improvement in the reversible geometrically converging case.
It would be interesting to further develop a more comprehensive understanding of this requirement as has been done for central limit theorems.
It would also be interesting to see additional equivalent conditions such as developed for total variation and a spectral gap \cite{roberts_variance_2008}.
Important applications for the implications of weak convergence also seem possible to understanding the degeneracy of the total variation distance when analyzing the performance of Markov chain Monte Carlo algorithms due to limitations of random number generation when the state space is not discrete.

\bibliographystyle{imsart-number}
\bibliography{references}

@article{brown:jones:2023,
author = {Austin Brown and Galin Jones},
title = {{Lower bounds on the rate of convergence for accept-reject-based Markov chains in Wasserstein and total variation distances}},
volume = {31},
journal = {Bernoulli},
number = {3},
publisher = {Bernoulli Society for Mathematical Statistics and Probability},
pages = {1908 -- 1928},
year = {2025}
}

@article{komorowski_central_2012,
  title        = {Central limit theorem for {Markov} processes with spectral gap in the {Wasserstein} metric},
  author       = {Komorowski, Tomasz and Walczuk, Anna},
  year         = 2012,
  journal      = {Stochastic Processes and their Applications},
  volume       = 122,
  number       = 5,
  pages        = {2155--2184},
  doi          = {10.1016/j.spa.2012.03.006},
  issn         = {03044149}
}

@book{nesterov_lectures_2018,
  title        = {Lectures on {Convex} {Optimization}},
  author       = {Nesterov, Yurii},
  year         = 2018,
  publisher    = {Springer International Publishing},
  address      = {Cham},
  series       = {Springer {Optimization} and {Its} {Applications}},
  volume       = 137,
  doi          = {10.1007/978-3-319-91578-4},
  isbn         = {978-3-319-91577-7 978-3-319-91578-4}
}

@misc{jin2020wasserstein,
  title        = {Wasserstein Rate Driven CLTs for Markov Chains with Weighted Lipschitz, Sobolev, and Stein Test Functions},
  author       = {Rui Jin and Aixin Tan},
  year         = 2020,
  url          = {https://arxiv.org/abs/2002.09427},
  eprint       = {2002.09427},
  archiveprefix = {arXiv},
  primaryclass = {math.ST}
}

@article{cattiaux_weak_2007,
  title        = {Weak logarithmic {Sobolev} inequalities and entropic convergence},
  author       = {Cattiaux, P. and Gentil, I. and Guillin, A.},
  year         = 2007,
  journal      = {Probability Theory and Related Fields},
  volume       = 139,
  number       = {3-4},
  pages        = {563--603},
  doi          = {10.1007/s00440-007-0054-5},
  issn         = {0178-8051, 1432-2064}
}

@article{roberts_variance_2008,
  title        = {Variance bounding {Markov} chains},
  author       = {Roberts, Gareth O. and Rosenthal, Jeffrey S.},
  year         = 2008,
  journal      = {The Annals of Applied Probability},
  volume       = 18,
  number       = 3,
  doi          = {10.1214/07-AAP486},
  issn         = {1050-5164}
}

@article{jones:2004,
  title        = {{On the Markov chain central limit theorem}},
  author       = {Galin L. Jones},
  year         = 2004,
  journal      = {Probability Surveys},
  publisher    = {Institute of Mathematical Statistics and Bernoulli Society},
  volume       = 1,
  number       = {none},
  pages        = {299 -- 320},
  doi          = {10.1214/154957804100000051}
}

@article{dedecker:rio:2000,
  title        = {On the functional central limit theorem for stationary processes},
  author       = {Jérôme Dedecker and Emmanuel Rio},
  year         = 2000,
  journal      = {Annales de l'Institut Henri Poincare (B) Probability and Statistics},
  volume       = 36,
  number       = 1,
  pages        = {1--34},
  doi          = {https://doi.org/10.1016/S0246-0203(00)00111-4},
  issn         = {0246-0203},
  url          = {https://www.sciencedirect.com/science/article/pii/S0246020300001114}
}

@article{QinHobert2021,
  title        = {On the limitations of single-step drift and minorization in Markov chain convergence analysis},
  author       = {Qin, Qian and Hobert, James P.},
  year         = 2021,
  journal      = {The Annals of Applied Probability},
  volume       = 31,
  number       = 4,
  pages        = {1633--1659},
  doi          = {10.1214/20-AAP1628}
}

@article{kipnis1986central,
  title        = {Central limit theorem for additive functionals of reversible Markov processes and applications to simple exclusions},
  author       = {Kipnis, Claude and Varadhan, SR Srinivasa},
  year         = 1986,
  journal      = {Communications in Mathematical Physics},
  publisher    = {Springer},
  volume       = 104,
  number       = 1,
  pages        = {1--19}
}

@misc{dwivedi_log-concave_2019,
  title        = {Log-concave sampling: {Metropolis}-{Hastings} algorithms are fast},
  author       = {Dwivedi, Raaz and Chen, Yuansi and Wainwright, Martin J. and Yu, Bin},
  year         = 2019,
  publisher    = {arXiv},
  doi          = {10.48550/arXiv.1801.02309},
  note         = {arXiv:1801.02309 [stat]}
}

@article{lovasz_random_1993,
  title        = {Random walks in a convex body and an improved volume algorithm},
  author       = {Lovász, L. and Simonovits, M.},
  year         = 1993,
  journal      = {Random Structures \& Algorithms},
  volume       = 4,
  number       = 4,
  pages        = {359--412},
  doi          = {10.1002/rsa.3240040402},
  issn         = {1042-9832, 1098-2418}
}

@article{qin2021wasserstein,
  title        = {Wasserstein-based methods for convergence complexity analysis of {MCMC} with applications},
  author       = {Qin, Qian and Hobert, James P},
  year         = 2021,
  journal      = {{\rm To appear in} Annals of Applied Probability}
}

@article{Madras2010,
  title        = {{Quantitative bounds for Markov chain convergence: Wasserstein and total variation distances}},
  author       = {Neal Madras and Deniz Sezer},
  year         = 2010,
  journal      = {Bernoulli},
  publisher    = {Bernoulli Society for Mathematical Statistics and Probability},
  volume       = 16,
  number       = 3,
  pages        = {882 -- 908}
}

@book{MT2009,
  title        = {{M}arkov Chains and Stochastic Stability},
  author       = {Meyn, Sean P. and Tweedie, Richard L.},
  year         = 2009,
  publisher    = {Cambridge University Press},
  address      = {USA},
  edition      = 2
}

@article{hairer_asymptotic_2011,
  title        = {Asymptotic coupling and a general form of {Harris}’ theorem with applications to stochastic delay equations},
  author       = {Hairer, M. and Mattingly, J. C. and Scheutzow, M.},
  year         = 2011,
  journal      = {Probability Theory and Related Fields},
  volume       = 149,
  number       = {1-2},
  pages        = {223--259},
  doi          = {10.1007/s00440-009-0250-6},
  issn         = {0178-8051, 1432-2064}
}

@article{bakry_rate_2008,
  title        = {Rate of convergence for ergodic continuous {Markov} processes: {Lyapunov} versus {Poincaré}},
  author       = {Bakry, Dominique and Cattiaux, Patrick and Guillin, Arnaud},
  year         = 2008,
  journal      = {Journal of Functional Analysis},
  volume       = 254,
  number       = 3,
  pages        = {727--759},
  doi          = {10.1016/j.jfa.2007.11.002},
  issn         = {00221236}
}

@article{rockner_weak_2001,
  title        = {Weak {Poincaré} {Inequalities} and {L2}-{Convergence} {Rates} of {Markov} {Semigroups}},
  author       = {Röckner, Michael and Wang, Feng-Yu},
  year         = 2001,
  journal      = {Journal of Functional Analysis},
  volume       = 185,
  number       = 2,
  pages        = {564--603},
  doi          = {10.1006/jfan.2001.3776},
  issn         = {00221236}
}

@article{durmus:etal:2016,
  title        = {Subgeometric rates of convergence in {W}asserstein distance for {M}arkov chains},
  author       = {Alain Durmus and Gersende Fort and {\'E}ric Moulines},
  year         = 2016,
  journal      = {Annales de l'Institut Henri Poincaré, Probabilités et Statistiques},
  publisher    = {Institut Henri Poincaré},
  volume       = 52,
  number       = 4,
  pages        = {1799--1822}
}

@article{douc_practical_2004,
  title        = {Practical drift conditions for subgeometric rates of convergence},
  author       = {Douc, Randal and Fort, Gersende and Moulines, Eric and Soulier, Philippe},
  year         = 2004,
  journal      = {The Annals of Applied Probability},
  volume       = 14,
  number       = 3,
  doi          = {10.1214/105051604000000323},
  issn         = {1050-5164}
}

@article{butkovsky_subgeometric_2014,
  title        = {Subgeometric rates of convergence of {Markov} processes in the {Wasserstein} metric},
  author       = {Butkovsky, Oleg},
  year         = 2014,
  journal      = {The Annals of Applied Probability},
  volume       = 24,
  number       = 2,
  doi          = {10.1214/13-AAP922},
  issn         = {1050-5164}
}

@article{tweedie_modes_1977,
  title        = {Modes of convergence of {Markov} chain transition probabilities},
  author       = {Tweedie, Richard L},
  year         = 1977,
  journal      = {Journal of Mathematical Analysis and Applications},
  volume       = 60,
  number       = 1,
  pages        = {280--291},
  doi          = {10.1016/0022-247X(77)90067-1},
  issn         = {0022247X}
}

@article{hairer_spectral_2014,
  title        = {Spectral gaps for a {Metropolis}-{Hastings} algorithm in infinite dimensions},
  author       = {Hairer, Martin and Stuart, Andrew M. and Vollmer, Sebastian J.},
  year         = 2014,
  journal      = {The Annals of Applied Probability},
  volume       = 24,
  number       = 6,
  doi          = {10.1214/13-AAP982},
  issn         = {1050-5164}
}

\end{document}